\newcommand\myText[1]{\text{\scriptsize\tabular[t]{@{}l@{}}#1\endtabular}}
\newtheorem{defi}{Definition}[section]
\newtheorem{thm}{Theorem}[section]
\newtheorem{rem}{Remark}[section]
\newtheorem{lem}{Lemma}[section]
\newtheorem{cor}{Corollary}[section]
\newcommand{\ds} {\displaystyle}
\newcommand{\e}{\epsilon}
\newcommand{\al} {\alpha}
\newcommand{\ba} {\beta}
\newcommand{\ra} {\rightarrow}
\newcommand{\De} {\Delta}
\newcommand{\la} {\lambda}
\newcommand{\noi} {\noindent}
\newcommand{\na} {\nabla}
\newcommand{\mb} {\mathbb}
\newcommand{\mc} {\mathcal}
\def\theequation{\@arabic{\c@section}.\@arabic{\c@equation}}
\def\R{{I\!\!R}}
\begin{document}

\title
{Critical exponent problems for $\frac{1}{2}$-Laplacian in $\mathbb R$}
\date{}
\maketitle
\author{
\begin{center}
{\sc J.  Giacomoni}\\
   LMAP (UMR CNRS 5142) Bat. IPRA,\\
  Avenue de l'Universit\'e \\
   F-64013 Pau, France\\
{\it e-mail: jacques.giacomoni@univ-pau.fr}\\
\vspace{1cm}
{\sc Pawan Kumar Mishra} and {\sc K. Sreenadh}\\
Department of Mathematics, \\ Indian Institute of Technology Delhi\\
Hauz Khaz, New Delhi-16, India
\\
{\it e-mail: pawanmishra31284@gmail.com, sreenadh@gmail.com}
\end{center}
}


\date{}

\maketitle

\begin{abstract}
\noindent We study the existence  of {weak} solutions for  fractional elliptic equations of the type,
\begin{equation*}
(-\Delta)^{\frac{1}{2}} u+ V(x) u= h(u) , u> 0 \;\textrm{in } \;\mathbb R,
\end{equation*}
where $h$ is a real valued function that behaves like $e^{u^2}$ as $u\rightarrow \infty$ and $V(x)$ is a positive, continuous unbounded function.
Here $(-\Delta)^{\frac{1}{2}}$ is the fractional Laplacian operator. We show the existence of mountain-pass solution when the nonlinearity is superlinear near $t=0$. We also study the corresponding critical exponent problem for the Kirchhoff equation
\[  m\left(\int_{\mathbb R}|(-\Delta)^{\frac{1}{2}}u|^2 dx+ \int_\mb R u^2 V(x)dx\right)\left((-\Delta)^{\frac{1}{2}} u+ V(x) u\right)= f(u)\;\; \text{in}\; \mathbb R   \]
 where $f(u)$ behaves like $e^{u^2}$ as $u\rightarrow \infty$ and $f(u)\sim u^3$ as $u\rightarrow 0$.
\end{abstract}

\section{Introduction}
\noindent
In this article, we study the existence  of {weak} solutions for  fractional elliptic equations of the type,
\begin{equation*}
(P)\; \quad(-\Delta)^{\frac{1}{2}} u+ V(x) u= h(u) , u> 0 \;\textrm{in } \;\mathbb R,
\end{equation*}
where the nonlinearity $h(u)$ satisfies critical growth of exponential type  which will be stated later.

\noindent We also study the corresponding critical exponent problem for the Kirchhoff equation
\[ (Q)\quad \left\{ m\left(\int_{\mathbb R}|(-\Delta)^{\frac{1}{2}}u|^2 dx+ \int_\mb R u^2 V(x)dx\right)\left((-\Delta)^{\frac{1}{2}} u+ V(x) u\right)= f(u)\;\; \text{in}\; \mathbb R\right.\] where $m:\mathbb R^+ \rightarrow \mathbb R^+$ and $f:\mathbb R \rightarrow \mathbb R^+$ are continuous functions that satisfy some {suitable} conditions. \\

\noindent The function $V(x)$ is a continuous function satisfying the following assumption:\\
\textbf{(V)} $ V(x)\ge V_0>0 $ in $\mathbb R$ and $V(x)\rightarrow \infty$ as $|x|\rightarrow \infty$. \\
An example of function satisfying the above assumption is $V(x)=\vert x\vert^p+V_0$ with $p>0$ and $V_0>0$.

\noindent  Here $(-\Delta)^{\frac{1}{2}}$ is the $\frac{1}{2}$-Laplacian operator defined as
\begin{equation*}
 (-\Delta)^{\frac{1}{2}}u(x)=\int_{\mathbb{R}} \frac{(u(x+y)+u(x-y)-2u(x))}{|y|^{2}}dy\;\;\;\;\;\textrm{for all}\; \; x\in \mathbb{R}.
 \end{equation*}
  The fractional Laplacian  operator has been a classical topic in Fourier analysis and nonlinear partial differential equations for a long time.  Fractional operators are  involved in financial mathematics, where Levy processes with jumps appear in modeling the asset prices (see \cite{app}).
Recently the fractional Laplacian has attracted many researchers. In particular, concerning nonlinear elliptic equations involving fractional operators, the issues of existence and properties of solutions (regularity, a priori bounds, asymptotic behavior, symmetry, etc.) have been discussed in detail (see for instance \cite{XcT}, \cite{CS}, \cite{ChKiLe}, \cite{JPS}, \cite{ionzsq}, \cite{PuTe}, \cite{secchi} and \cite{YYY}. The critical exponent problems for square root of Laplacian are studied in \cite {capella}, \cite{tan}.

\noindent In \cite{tan}, authors have studied the following Brezis-Nirenberg type critical exponent problem on bounded domains $\Omega \subset \mathbb R^n, \; n\ge 2$:
\begin{equation*}
(-\Delta)^{\frac{1}{2}} u
= \lambda u+ u^{\frac{2n}{n-1}},\; u>0\;  \text{in } \Omega,\quad
u=0 \;\mbox{in }  \mb{R}^n\backslash \Omega,
\end{equation*}
 by studying its harmonic extension problem. The idea of these harmonic extensions was initially introduced and studied in the beautiful work of Caffarelli and Silvestre \cite{CS}. The critical exponent problems in the limiting case $n=1$ and with nonlinearities with exponential growth are studied in \cite{JPS}. Here the exponential type nonlinearity is motivated by fractional Moser-Trudinger embedding due to Ozawa \cite{yz, ozaw}.

\noindent In \cite {mrsq}, authors considered the problem in the whole space $\mathbb R$:
\begin{equation*}
(-\Delta)^{1/2}u + u=K(x)g(u) \;\;\textrm{in}\;\; \mathbb{R}
\end{equation*} where $K$ is a real valued positive function and $g$ {has a} critical exponential growth and {is} super-quadratic near $0$. Here authors proved the existence of solutions by studying the corresponding harmonic extension problem under suitable conditions on $K$ and $g$. In section {3}, we improve this result by identifying more accurately the first critical level under which the Palais-Smale condition holds. To achieve this, from the sharp Trudinger-Moser inequality of Theorem \ref{fmt} for bounded domains, we  derive a new Trudinger-Moser inequality in the whole space (see Theorem \ref{thmnew}) and show the existence of { a Palais-Smale} sequence that concentrates on the boundary $\mathbb{R}\times \{0\}$ in the spirit of \cite{adyadav} {and whose energy level is strictly below the first critical level}. {We highlight that the assumption (h4) (see section 3) plays an important role in proving such compactness of the exhibited Palais-Smale sequence. Furthermore, in the local setting (see \cite{adyadav},\cite{AdPr}, \cite{jacprsre}), (h4) appears to be the sharp condition on the asymptotic behaviour of nonlinearity $h$ to ensure the existence of nontrivial solutions for critical problems in two dimensions. We show that it still holds for more general non local problems as $(Q)$ investigated in section 4. }

\noindent Elliptic problems with exponential growth nonlinearities are motivated by the Moser-Trudinger inequality \cite{moserr}, namely
\[\sup_{\|u\|_{H^{1}_{0}(\Omega)}\le 1} \int_{\Omega} e^{\alpha u^2} dx <\infty, \; \text{if and only if} \; \alpha \le 4\pi, \]
where $\Omega \subset \R^2$ {is a} bounded domain. The existence of solutions for critical exponent problem was initiated and studied in \cite{adi, adyadav,fmr}. Subsequently, these results were generalized to unbounded domains in \cite{doo,panda}.

\noindent The space $H^{\frac{1}{2}}(\mathbb R)$ is the Hilbert space with the norm defined as
\begin{equation}
\|u\|^2= \|u\|_{L^2({\mathbb R})}+\int_{\mathbb R}|(-\Delta)^{\frac{1}{4}}u|^2dx.
\end{equation}
The space $H_0^{\frac{1}{2}}({\mathbb R})$ is the completion of $C_0^{\infty}({\mathbb R})$ under
$[u]=\left(\int_\mathbb R|(-\Delta)^\frac{1}{4}u|^2dx\right)^{\frac{1}{2}}$.

\noindent The problems of the type $(P)$ with exponential growth nonlinearities are motivated from the fractional Trudinger-Moser inequality \cite{lm}, which gives the optimal constant and improves the former results of Ozawa \cite{ozaw} and Kozono, Sato-Wadade \cite{ksw}. {Precisely, let} $I$ be a bounded interval of ${\mathbb R}$. Set $X(I):=\{u\in H^{\frac{1}{2}}({\mathbb R}) \,:\, u\equiv\,0 \mbox{ in }{\mathbb R}\backslash I\}$. {Then,}
\begin{thm}\label{fmt}
For $u\in H^{\frac{1}{2}}(I)$, $e^{\beta u^2} \in L^1(I)$ for any $\beta>0$. Moreover there exists a constant $C>0$ such that
\begin{eqnarray*}
\displaystyle\sup_{u\in X(I),\|(-\Delta)^{\frac{1}{4}} u\|_{
L^2(I)}\le 1 }\int_{I} e^{\alpha u^2} dx \le C |I|\quad \text{for all}\;\; \alpha \le \pi.
\end{eqnarray*}
\end{thm}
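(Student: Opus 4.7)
The plan is to reduce the uniform supremum bound to a two-dimensional Moser-Trudinger inequality via the Caffarelli-Silvestre harmonic extension, and then to obtain the pointwise $L^1$ statement by a norm-splitting argument. By density, one may assume throughout that $u \in C_c^\infty(I)$.

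First, for $u \in X(I)$ with $\|(-\Delta)^{1/4} u\|_{L^2(\mathbb R)} \le 1$, I would let $w$ denote its Poisson (harmonic) extension to the upper half-plane $\mathbb R^2_+$. A standard Fourier-side computation gives the identity
\[ \int_{\mathbb R^2_+} |\nabla w|^2\,dxdy \;=\; c\,\|(-\Delta)^{1/4} u\|_{L^2(\mathbb R)}^2, \]
with $c>0$ a fixed constant depending only on the normalization of $(-\Delta)^{1/2}$ used in the paper, while $\mathrm{tr}(w)=u$ on $\mathbb R \times \{0\}$. Reflecting $w$ evenly across $\{y=0\}$ produces $\tilde w \in H^1_{\mathrm{loc}}(\mathbb R^2)$ with doubled Dirichlet energy; since $u \equiv 0$ outside $I$, one may localize $\tilde w$ through a cutoff supported in a bounded disk $B \subset \mathbb R^2$ containing $\bar I \times \{0\}$ in its interior, producing an admissible element of $H^1_0(B)$.

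Second, I would apply the classical sharp Moser-Trudinger inequality on $B$ (best constant $4\pi$) to this cutoff function and then restrict back to the boundary by converting the resulting two-dimensional area integral into a one-dimensional integral over $\bar I \times \{0\}$. The optimal constant $\pi$ for this boundary integral is extracted by a symmetric decreasing rearrangement compatible with the reflection symmetry across $\{y=0\}$ (or, equivalently, by Moser's one-dimensional reduction along level sets of the extension), yielding
\[ \int_I e^{\alpha u^2}\,dx \le C|I|,\qquad \alpha \le \pi. \]

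Third, for the pointwise integrability statement, I would fix $u \in H^{1/2}(I)$ and $\beta > 0$, extend $u$ to $\tilde u \in H^{1/2}(\mathbb R)$ with support in a slight enlargement $I'$ of $I$, and write $\tilde u = v_M + r_M$, where $v_M$ is the truncation of $\tilde u$ at level $M$. Dominated convergence in the Gagliardo double integral forces $[r_M] \to 0$ as $M \to \infty$, so one may choose $M$ with $4\beta [r_M]^2 \le \pi$. Then $e^{\beta \tilde u^2} \le e^{2\beta M^2}\,e^{2\beta r_M^2}$, and the second factor is integrable on $I$ by the uniform estimate applied to the normalized residue $r_M/[r_M] \in X(I')$.

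The main obstacle is pinning down the sharp constant $\pi$. A qualitative bound (with some finite $\alpha$) is easy from the Sobolev embedding and a Taylor expansion of the exponential as in Ozawa \cite{ozaw}; however, optimality demands either a delicate boundary rearrangement at the level of the harmonic extension or a direct Fourier-analytic argument along the lines of \cite{lm}. One must also verify that the even reflection and the localization cutoff do not degrade the constant, which requires sharp control of the cutoff near $\partial I \times \{0\}$ and a careful accounting of the normalization factor $c$ appearing in the extension identity.
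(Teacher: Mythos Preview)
The paper does not prove Theorem~\ref{fmt} at all: it is quoted as a known result from the literature, attributed to \cite{lm} (with earlier, non-sharp versions in \cite{ozaw} and \cite{ksw}). So there is no ``paper's own proof'' to compare against; the theorem is used as input, not established.

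That said, your proposed argument has a genuine gap at its core step. The classical Moser--Trudinger inequality on a disk $B\subset\mathbb R^2$ controls the \emph{area} integral $\int_B e^{4\pi \tilde w^2}\,dxdy$; it says nothing about the one-dimensional trace integral $\int_I e^{\alpha u^2}\,dx$ along the segment $I\times\{0\}$. Your sentence ``restrict back to the boundary by converting the resulting two-dimensional area integral into a one-dimensional integral'' is exactly the step that carries all the content, and it is not a restriction: a bound on $\int_B$ does not imply a bound on an integral over a null set. What is actually needed is a \emph{trace} Moser--Trudinger inequality on a half-disk (with the sharp constant $\pi$), and proving that requires a separate one-dimensional reduction in the spirit of Moser, not an appeal to the interior inequality. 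You flag this as ``the main obstacle,'' but it is really the entire theorem.

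There is a second, independent problem with the localization. The harmonic extension $w$ of $u\in X(I)$ does \emph{not} vanish outside a compact set of $\mathbb R^2_+$; it decays only polynomially. Multiplying $\tilde w$ by a cutoff $\phi$ supported in a fixed disk $B$ adds a term $\int |\nabla\phi|^2\,\tilde w^2$ to the Dirichlet energy, and there is no reason this is uniformly small over the class $\|(-\Delta)^{1/4}u\|_{L^2}\le 1$. So even granting a sharp trace inequality on $B$, you would not recover the sharp constant $\pi$ for the original problem. The actual proofs in \cite{lm} proceed by direct Fourier/rearrangement methods on $H^{1/2}$ rather than via the harmonic extension, precisely to avoid this loss.
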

\noindent Our approach in the present paper is based on the Caffarelli-Silvestre approach to fractional Laplacians in \cite{CS}. In \cite{CS} it was shown that for any $v\in H^{\frac{1}{2}}(\mathbb{R}), $ the unique function $w(x,y)$ that minimizes the weighted integral
\[\mathcal{E}_{\frac{1}{2}}(w)=\int_{0}^{\infty}\int_{\mathbb{R}} |\nabla w(x,y)|^2 dx dy\]
over the set $\left\{w(x,y): \mathcal{E}_{\frac{1}{2}}(w)<\infty, w\vline_{y=0}=v\right\}$ satisfies
$\int_{\mathbb{R}}|(-\Delta)^{\frac{1}{{4}}}v|^2 = \mathcal{E}_{\frac{1}{2}}(w).$ Moreover $w(x,y)$ solves the boundary value problem
\begin{equation}
-\text{div}(\nabla w)=0 \; \text{in}\; \mathbb{R}\times \mathbb{R}_+,\;\; w\vline_{y=0}=v\quad
\frac{\partial w}{\partial \nu}=(-\Delta)^{1/2} v(x) \label{new1} \end{equation}
where $\frac{\partial w}{\partial \nu}=\displaystyle\lim_{y\rightarrow 0^+}\frac{\partial w}{\partial y}(x, y)$.
We denote the upper half space in $\mathbb{R}^2$ as $\mathbb{R}_+^2=\{(x, y)\in \mathbb{R}^2 |\; y>0\}$.
The space $X_1(\mathbb R^2_+)$ is defined as the completion of $C_0^{\infty}(\mathbb R^2_+)$ under the semi-norm
\begin{equation}
\|w\|_{X_1}=\left(\int_{\mathbb R^2_+} |\nabla w|^2~dxdy\right)^{\frac{1}{2}}.
\end{equation}
For a function $u\in H_0^{\frac{1}{2}}(\mathbb R)$, the solution $w\in X_1(\mathbb {R}^2_+)$ of the problem $\eqref{new1}$ is called harmonic extension of $u$. The map $\mathcal{E}_{\frac{1}{2}}: H_0^{\frac{1}{2}}(\mathbb R)\rightarrow X_1(\mathbb {R}^2_+)$ is an isometry. We look for solutions in the Hilbert space $E_V$ defined as
\begin{equation*}
E_V=\left\{w\in X_1(\mathbb R^2_+): \int_\mathbb R|w(x, 0)|^2 V(x) dx<\infty\right\}
\end{equation*}
equipped with the norm
\[\|w\|=\left(\int_{\mathbb R^2_+} |\nabla w|^2 dx dy + \int_{\mathbb R}|w(x,0)|^2 V(x) dx\right)^{\frac{1}{2}}.\]
\noindent From the assumption ${\bf (V)}$ and the continuous embedding of $E_V$ in $L^q({\mathbb R})$ for $q\in [2,\infty)$, it follows that the  embedding $ E_V \ni u \mapsto  |u|^r  \in L^1(\mathbb R)$ is compact for all $r\in [2,\infty)$ . Moreover, the minimization problem
\[\lambda_1 = \min_{w\in E_V} \left\{ \int_{\mathbb R^{2}_{+}} |\nabla w|^2+ \int_{\mathbb R} |w(x,0)|^2 V(x)dx: \int_{\mathbb R} |w(x,0)|^2 dx=1\right\}\]  admits a non-negative minimizer and $\lambda_1 \ge V_0>0.$
%
%
\noindent In this paper, first we discuss the Adimurthi \cite{adi} type existence result for the fractional Laplacian equation in $(P)$  with nonlinearity $ h(u)$ that has superlinear growth near zero and {critical} exponential
growth near $\infty$. To prove our result we analyze the first critical level using the Moser functions which are dilations and truncations of fundamental solutions in ${\mathbb R}^2$ and study the compactness of Palais-Smale sequences below this level.
In the second part, we discuss the Kirchhoff fractional Laplacian equation in $(Q)$ with critical exponential nonlinearity that behaves like $u^3$ near the origin and $e^{u^2}$ at $\infty.$ Here, using the critical level obtained in the section 2, we study the critical level for the Kirchhoff problems and we use the Moser functions concentrating on the boundary along with Lion's Lemma on higher integrability to show the strong convergence of Palais-Smale sequences below the critical level.

\noindent {We now give the organization of the paper.} In section 2, we present a version of Moser-Trudinger inequality which is the central idea of the proof of existence result in section 3. In section 3, we consider the critical exponent problem with positive
nonlinearity and prove Adimurthi's type existence result. In section 4, we consider the problem $(Q)$ and study the existence result.
\section{A Moser-Trudinger inequality}
\setcounter{equation}{0}
\noindent In this section we prove the Moser-Trudinger inequality on $\mathbb R^{2}_{+}$. We  follow the ideas
 in \cite{yz}. We prove the following theorem.
\begin{thm}\label{thmnew}
There exists a constant $C>0$ such that
\begin{equation}\label{fmtnew2}
\sup_{\|w\|_{H^1(\mathbb R^{2}_{+})}\le 1} \int_{\mathbb R} \left(e^{\alpha (w(x,0))^2} -1\right) dx \le C \; \text{for all}\;\; 0< \alpha < \pi.
\end{equation}
\end{thm}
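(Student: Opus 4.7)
I would follow the Ruf--Adachi--Tanaka splitting strategy, reducing the desired inequality on $\mathbb R$ to the bounded--interval Moser--Trudinger inequality of Theorem~\ref{fmt}.

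\textbf{Step 1 (setup).} For $w\in H^1(\mathbb R^2_+)$ with $\|w\|_{H^1(\mathbb R^2_+)}\le 1$, write $u:=w(\cdot,0)\in H^{1/2}(\mathbb R)$. Since the harmonic extension of $u$ minimises the Dirichlet energy among all $X_1$-extensions (Caffarelli--Silvestre), one has $[u]_{H^{1/2}}^2=\int_\mathbb R |(-\Delta)^{1/4}u|^2\,dx\le \int_{\mathbb R^2_+}|\nabla w|^2\,dxdy\le 1$, while the standard trace embedding $H^1(\mathbb R^2_+)\hookrightarrow L^2(\partial\mathbb R^2_+)$ gives $\|u\|_{L^2(\mathbb R)}\le C\|w\|_{H^1(\mathbb R^2_+)}\le C$. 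These are the only two pieces of information I will use.

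\textbf{Step 2 (splitting and the easy part).} Let $A=\{|u|\le 1\}$ and $B=\{|u|>1\}$. The monotonicity of $t\mapsto (e^t-1)/t$ on $[0,\alpha]$ yields $e^{\alpha u^2}-1\le (e^\alpha-1)u^2$ on $A$, so
\[
\int_A (e^{\alpha u^2}-1)\,dx\le (e^\alpha-1)\,\|u\|_{L^2(\mathbb R)}^2\le C.
\]
Chebyshev's inequality gives $|B|\le \|u\|_{L^2}^2\le C$, so $B$ has finite (bounded) measure.

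\textbf{Step 3 (truncation and rearrangement on $B$).} Define $\tilde u:=(|u|-1)_+\,\mathrm{sgn}(u)$. Since the real map $t\mapsto (|t|-1)_+\mathrm{sgn}(t)$ is $1$-Lipschitz, $|\tilde u(x)-\tilde u(y)|\le |u(x)-u(y)|$ pointwise, hence $[\tilde u]_{H^{1/2}}\le [u]_{H^{1/2}}\le 1$, and moreover $\tilde u$ vanishes outside $B$. On $B$ we write $u=\tilde u+\mathrm{sgn}(u)$ and use $(a+b)^2\le (1+\epsilon)a^2+(1+1/\epsilon)b^2$ to get, for any $\epsilon>0$,
\[
e^{\alpha u^2}\le e^{\alpha(1+1/\epsilon)}\,e^{\alpha(1+\epsilon)\tilde u^2}.
\]
Fix $\epsilon>0$ so small that $\alpha(1+\epsilon)<\pi$. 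Let $\tilde u^*$ be the symmetric-decreasing rearrangement of $\tilde u$, supported in $I:=[-|B|/2,|B|/2]$. The fractional P\'olya--Szeg\"o inequality (Frank--Lieb--Seiringer) provides $[\tilde u^*]_{H^{1/2}}\le [\tilde u]_{H^{1/2}}\le 1$, and since the integrand depends only on $|\tilde u|$,
\[
\int_B e^{\alpha(1+\epsilon)\tilde u^2}\,dx=\int_I e^{\alpha(1+\epsilon)(\tilde u^*)^2}\,dx.
\]
As $\tilde u^*\in X(I)$ with $\|(-\Delta)^{1/4}\tilde u^*\|_{L^2(I)}\le 1$, Theorem~\ref{fmt} applied with exponent $\alpha(1+\epsilon)<\pi$ bounds the right-hand side by $C|I|\le C$. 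Combining with Step~2 and the constant factor $e^{\alpha(1+1/\epsilon)}$ gives $\int_\mathbb R(e^{\alpha u^2}-1)\,dx\le C$, as desired.

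\textbf{Expected obstacle.} The sensitive ingredient is the fractional P\'olya--Szeg\"o step: the nonlocality of $(-\Delta)^{1/4}$ makes the monotonicity of the $H^{1/2}$-seminorm under symmetric rearrangement nontrivial. If one prefers to avoid rearrangement, the alternative is a Whitney-type covering of $B$ by small intervals combined with a finite-overlap application of Theorem~\ref{fmt}, along the lines of \cite{yz}; it is more computational but self-contained within the framework of the paper.
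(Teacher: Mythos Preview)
Your argument is correct and takes a genuinely different route from the paper. The paper never passes to the trace: it works on $\mathbb R^2_+$ throughout, covering the half-plane by half-strips $\mathcal C_R(x_i)=I_R(x_i)\times\mathbb R_+$ with bounded overlap, localising $w$ by cut-offs $\phi_i$ supported in $\mathcal C_R(x_i)$ and equal to $1$ on $\mathcal C_{R/2}(x_i)$, choosing $R$ large so that $|\nabla\phi_i|^2|w|^2$ is absorbed into the $L^2$ part of the $H^1$ norm at the cost of a factor $1+\epsilon$, then applying the trace inequality \eqref{fmtnew1} (together with the quantitative local Lemma~2.1) on each $I_R(x_i)$ and summing via the finite overlap. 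This is precisely the \cite{yz} scheme you flag at the end as the alternative.

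Your route is cleaner in structure --- one reduction to the trace, one truncation, one rearrangement, and a single call to Theorem~\ref{fmt} --- but the price is the fractional P\'olya--Szeg\H{o} inequality, a nontrivial external fact not otherwise needed in the paper. The covering proof is more computational but fully self-contained once \eqref{fmtnew1} is in hand. It is also worth noting that your argument really only uses the two scalar quantities $[u]_{H^{1/2}}\le 1$ and $\|u\|_{L^2(\mathbb R)}\le C$ from Step~1, so it proves the slightly stronger statement that the supremum in \eqref{fmtnew2} is already finite over all $u\in H^{1/2}(\mathbb R)$ with $\|(-\Delta)^{1/4}u\|_{L^2}\le 1$ and $\|u\|_{L^2}\le 1$, without reference to any two-dimensional extension; the paper's proof, by contrast, genuinely uses the extra room in $\mathbb R^2_+$ to perform the cut-off.
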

 \noindent We define $I_R(x)=[x-R, x+R]$ and  $\mathcal{C}_{R}(x)= I_R(x) \times \mathbb R_{+}$ and the space $H^{1}_{0,L}(\mathcal{C}_{R}(x))$ as
\[H^{1}_{0,L}(\mathcal{C}_R(x))=\left\{ w\in H^{1}(\mathcal{C}_R(x)): w=0 \; \text{on} \; \{x-R, x+R\}\times \mathbb R_+\right\}.\]
Then $H^{1}_{0,L}(\mathcal{C}_{R}(x)) $ is a Hilbert space with respect to the norm
\[\|w\|_{\mathcal C} = \left(\int_{\mathcal{C}_{R}(x)} |\nabla w|^2\right)^{\frac{1}{2}}.\]
The following Moser-Trudinger trace inequality for $H^{1}_{0,L}(\mathcal{C}(x))$ follows from theorem \ref{fmt}:\\
There exists $C>0$  such that
\begin{equation}\label{fmtnew1}
\displaystyle \sup_{\|w\|_{\mathcal{C}_R(x)} \leq1} \int_{I_R(x)} e^{\alpha w(x, 0)^2}dx\leq C R.
\end{equation}
We have the following Lemma.
\begin{lem}
Let $0<\alpha<\pi$. For $x_0\in \mathbb R$ and $w\in H^{1}_{0,L}(\mathcal{C}_{R}(x_0))$ with $\|w\|_{\mathcal{C}} \le 1$, we have
\[
\int_{I_R(x_0)}\left(e^{\alpha w(x,0)^2}-1\right) dx \le C R \left(\int_{\mathcal{C}_R(x_0)} |\nabla w(x,y)|^2 dx dy \right).\]
\end{lem}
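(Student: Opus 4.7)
The plan is to reduce the estimate to the scale-invariant case $R=1$, then upgrade the sharp Trudinger--Moser bound \eqref{fmtnew1} (which only gives $\le CR$) by exploiting the fact that $\alpha$ lies \emph{strictly} below the critical threshold $\pi$, so a small polynomial factor can be extracted from $e^{\alpha w^2}-1$ and traded for the Dirichlet energy via a trace-Sobolev embedding.

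\textbf{Step 1: rescaling.} Set $\tilde w(\xi,\eta):=w(x_0+R\xi,R\eta)$, which maps $H^1_{0,L}(\mathcal{C}_R(x_0))$ isometrically onto $H^1_{0,L}(\mathcal{C}_1(0))$ for the Dirichlet norm, since $\int_{\mathcal{C}_1}|\nabla\tilde w|^2=\int_{\mathcal{C}_R(x_0)}|\nabla w|^2$ by scale invariance of the 2D Dirichlet integral. Moreover $\int_{I_R(x_0)}F(w(x,0))\,dx=R\int_{I_1(0)}F(\tilde w(\xi,0))\,d\xi$ for any $F$. Hence it suffices to prove, for $\tilde w\in H^1_{0,L}(\mathcal{C}_1(0))$ with $\|\tilde w\|_{\mathcal{C}}\le 1$,
\begin{equation*}
\int_{I_1(0)}\bigl(e^{\alpha\tilde w(\xi,0)^2}-1\bigr)\,d\xi\;\le\;C\int_{\mathcal{C}_1(0)}|\nabla\tilde w|^2\,d\xi\,d\eta,
\end{equation*}
with $C$ independent of $\tilde w$; the factor $R$ then reappears after unscaling.

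\textbf{Step 2: pointwise inequality and H\"older.} Using the elementary bound $e^s-1\le s\,e^s$ for $s\ge 0$, I write
\begin{equation*}
\int_{I_1}(e^{\alpha\tilde w^2}-1)\,d\xi\;\le\;\alpha\int_{I_1}\tilde w^2\,e^{\alpha\tilde w^2}\,d\xi.
\end{equation*}
Since $\alpha<\pi$, I choose $p>1$ with $p\alpha<\pi$ and let $q=p/(p-1)$. By H\"older,
\begin{equation*}
\int_{I_1}\tilde w^2\,e^{\alpha\tilde w^2}\,d\xi\;\le\;\Bigl(\int_{I_1}|\tilde w|^{2q}\,d\xi\Bigr)^{1/q}\Bigl(\int_{I_1}e^{p\alpha\tilde w^2}\,d\xi\Bigr)^{1/p}.
\end{equation*}

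\textbf{Step 3: bounding the two factors.} Because $p\alpha<\pi$ and $\|\tilde w\|_{\mathcal{C}}\le 1$, the sharp inequality \eqref{fmtnew1} (applied on $\mathcal{C}_1$) gives $\int_{I_1}e^{p\alpha\tilde w^2}\,d\xi\le C$. For the polynomial factor I use the trace-Sobolev chain
\begin{equation*}
\|\tilde w(\cdot,0)\|_{L^{2q}(I_1)}\;\le\;C\,\|\tilde w(\cdot,0)\|_{H^{1/2}(I_1)}\;\le\;C\,\|\tilde w\|_{H^1(\mathcal{C}_1)}\;\le\;C\,\|\nabla\tilde w\|_{L^2(\mathcal{C}_1)};
\end{equation*}
the first step is the Sobolev embedding $H^{1/2}(I_1)\hookrightarrow L^{2q}(I_1)$ (valid for every finite $q$), the second is the standard trace theorem, and the third is the Poincar\'e inequality on the strip $\mathcal{C}_1=(-1,1)\times\mathbb{R}_+$ for functions vanishing on the lateral boundary $\{\pm 1\}\times\mathbb{R}_+$, which follows from writing $\tilde w(\xi,\eta)=\int_{-1}^{\xi}\partial_\xi\tilde w(s,\eta)\,ds$ and integrating. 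Combining,
\begin{equation*}
\int_{I_1}(e^{\alpha\tilde w^2}-1)\,d\xi\;\le\;C\,\|\nabla\tilde w\|_{L^2(\mathcal{C}_1)}^{2},
\end{equation*}
and unscaling produces the desired $CR$ on the right-hand side.

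\textbf{Main obstacle.} The only non-cosmetic point is justifying the trace-Sobolev-Poincar\'e chain on the semi-infinite strip with a universal constant; once one notices that the lateral Dirichlet condition gives Poincar\'e in the $\xi$-variable (with constant depending only on the width, which is fixed after the rescaling), the rest is routine. The role of the strict inequality $\alpha<\pi$ is precisely to leave room for the exponent $p>1$ in H\"older, which is what allows the gradient energy, rather than merely $1$, to appear on the right-hand side.
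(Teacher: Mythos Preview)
Your argument is correct, but it is considerably more elaborate than what the paper does. The paper's proof is a two-line series comparison: set $\tilde w=w/\|w\|_{\mathcal C}$, expand both exponentials as power series, and observe that since $\|w\|_{\mathcal C}\le 1$ one has $\|w\|_{\mathcal C}^{2k}\le\|w\|_{\mathcal C}^{2}$ for every $k\ge 1$, whence
\[
\int_{I_R}(e^{\alpha w^2}-1)\,dx=\sum_{k\ge 1}\frac{\alpha^k}{k!}\int_{I_R}|w|^{2k}\le \|w\|_{\mathcal C}^{2}\sum_{k\ge 1}\frac{\alpha^k}{k!}\int_{I_R}|\tilde w|^{2k}=\|w\|_{\mathcal C}^{2}\int_{I_R}(e^{\alpha\tilde w^2}-1)\,dx\le CR\,\|w\|_{\mathcal C}^{2},
\]
the last step being \eqref{fmtnew1} applied to the unit-norm function $\tilde w$. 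No rescaling, no H\"older splitting, no trace or Poincar\'e inequalities are needed, and the argument in fact works all the way up to $\alpha=\pi$. Your route, by contrast, genuinely requires the strict inequality $\alpha<\pi$ to make room for the H\"older exponent $p>1$, and it relies on the trace--Sobolev--Poincar\'e chain on the semi-infinite strip that you flag as the main technical point. That chain is fine (extension by zero in the $\xi$-variable plus the minimizing property of the harmonic extension give the $\dot H^{1/2}$ trace bound, and Poincar\'e in $\xi$ handles the $L^2$ part), but it is machinery that the simple normalization trick avoids entirely.
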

\begin{proof}
By \eqref{fmtnew1}, we have
\begin{equation}\label{new2.0}
\sup_{ \|w\|_{\mathcal{C}} \le 1} \int_{I_R(x_0)} (e^{\alpha (w(x,0))^2} -1) dx \leq C R, \; \text{for} \; \alpha\le\pi.
\end{equation}
Let $\tilde{w}=\frac{w}{\|w\|_{\mathcal{C}}}$ for any $w\in H^{1}_{0,L}(\mathcal{C}_R(x_0))\backslash \{0\}$. Then

\begin{align}\label{new2.1}
\int_{I_R(x_0)} (e^{\alpha {{\tilde{w}(x,0)}}^2} -1)dx & \ge \frac{1}{\| w\|_{\mathcal{C}}} \int_{I_R(x_0)} \sum_{k=1}^{\infty} \frac{\alpha^k |w|^{2k}}{k!} dx \nonumber\\
&= \frac{1}{\|w\|_{\mathcal{C}}} \int_{I_R(x_0)} (e^{\alpha (w(x,0))^2} -1) dx.
\end{align}
Now the proof follows from  \eqref{new2.0} and \eqref{new2.1}.
\end{proof}
\noindent
For any $R>0$, there exists $\{x_i\}\subset \mathbb R$ such that
\begin{enumerate}
\item $\cup_{i=1}^{\infty} \mathcal{C}_{\frac{R}{2}} (x_i) = \mathbb R^{2}_{+}$,
\item for $i\ne j,\; \mathcal{C}_{\frac{R}{4}}(x_i)\cap \mathcal{C}_{\frac{R}{4}}(x_j)=\emptyset$,
\item $\forall x\in \mathbb R^{2}_{+}$, $x$ belongs to at most $N$ sets $\mathcal{C}_R(x_i)$ for some $N$.
\end{enumerate}
Now we will prove the Theorem:\\
{\bf Proof of theorem \ref{thmnew}:} Let $R>0$ { to be fixed later} and let $\phi_i$ be cut-off { functions} such that
$(1)\; \phi_i\in {C^{\infty}}(\mathcal{C}_R(x_i)),$ { such that $\mbox{supp}(\phi_i)\subset K_i\times {\mathbb R}^+$ with  $K_I$ compact set of $(x_i-R,x_i+R)$},
$(2)\; 0\le \phi_i\le 1 \; \text{on}\; \mathcal{C}_R(x_i)$
 $(3)\; \phi\equiv 1\; \text{on}\; C_{\frac{R}{2}}$
 $(4)\; |\nabla \phi_i|\le \frac{4}{R}$.
For $w\in H^1(\mathbb R^{2}_{+})$ such that $\displaystyle \int_{\mathbb R^{2}_{+} }\left( |\nabla w|^2 + |w|^2\right)  \le 1,$ we have $\phi_i w \in H^{1}_{0,L}(\mathcal{C}_R(x_i))$. Moreover,
\begin{align}\label{new1.3}
\int_{\mathcal{C}_R(x_i)} |\nabla (\phi_i w)|^2 & \le (1+\epsilon) \int_{\mathcal{C}_R(x_i)}  \phi_{i}^{2}|\nabla  w|^2 +\frac{C(\epsilon)}{R^2} \int_{\mathcal{C}_R(x_i)} |w|^2\nonumber\\
&\le  (1+\epsilon)\left(\int_{\mathcal{C}_R(x_i)} |\nabla  w|^2 + \int_{\mathcal{C}_R(x_i)} |w|^2\right)
\end{align}
where in the last inequality we choose a sufficiently large $R$ to make sure $\frac{C(\epsilon)}{R}\le 1+\epsilon$. Let $\alpha_\epsilon= \frac{\pi}{1+\epsilon}$ and $\widetilde{\phi_i w} {:=\frac{\phi_i w}{(1+\epsilon)^{\frac{1}{2}}}}\in H^{1}_{0,L}(\mathcal{C}_R), $ by Lemma 2.1 and \eqref{new1.3}, we get
\begin{align}\label{new2.4}
{\int_{I_{\frac{R}{2}(x_i)} }\left(e^{\alpha_\epsilon (w(x,0))^2}-1\right) dx} &\le \int_{I_R(x_i)} \left(e^{\alpha_\epsilon |\phi_i w(x,0)|^2} -1\right)dx\notag\\
&=\int_{I_R(x_i)} \left(e^{\pi {|\widetilde{\phi_i w}|^2}}-1\right)dx\notag\\
&\le C R \int_{\mathcal{C}_R(x_i)} |\nabla(\widetilde{\phi_i w})|^2 dx dy \notag\\
&\le C R \int_{\mathcal{C}_R(x_i)} \left(|\nabla w|^2 + w^2\right) dxdy.
\end{align}
Therefore, by Lemma 2.1 and \eqref{new2.4}, we have
\begin{align*}
\int_{\mathbb R} \left(e^{\alpha_\epsilon (w(x,0))^2}-1\right) dx &\le \int_{\cup_{i=1}^{\infty} I_{\frac{R}{2}}(x_i)} \left(e^{\alpha_\epsilon (w(x,0))^2} -1\right)dx\\
&\le \sum_{i=1}^{\infty} \int_{I_{\frac{R}{2}}(x_i)} \left(e^{\alpha_\epsilon (w(x,0))^2}-1\right) dx\\
&\le \sum_{i=1}^{\infty}  C R \int_{\mathcal{C}_R(x_i)} (|\nabla w|^2+w^2) dx dy\\
&\le C R N \int_{\mathbb{R}^{2}_{+}} (|\nabla w|^2+w^2)dx\\
&\le C R N.
\end{align*}
For any $\alpha<\pi,$ we can choose $\epsilon>0$ sufficiently small such that $\alpha <\alpha_\epsilon$. This completes the proof of Theorem 1.1.
\begin{cor}
There exists a constant $C>0$ such that
\begin{equation}\label{newcor}
\sup_{\|w\|\le 1} \int_{\mb R} e^{\al |w(x,0)|^2} dx \le C, \quad \text{for all} \;\; 0<\al<\pi.
\end{equation}
\end{cor}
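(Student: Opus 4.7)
The plan is to deduce the corollary from Theorem \ref{thmnew} by a $y$-truncation argument that bridges the two different norms. The norm $\|\cdot\|$ of $E_V$ controls only the Dirichlet energy $\int_{\mathbb R^2_+}|\nabla w|^2$ plus the weighted trace integral $\int_{\mathbb R}|w(x,0)|^2 V(x)\,dx$, whereas Theorem \ref{thmnew} is phrased in terms of the full Sobolev norm $\|\cdot\|_{H^1(\mathbb R^2_+)}$; therefore the argument must convert the control on the weighted trace norm into control on $\int_{\mathbb R^2_+}|w|^2$ via a trace-type estimate.

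I fix a smooth cut-off $\psi:[0,\infty)\to[0,1]$ with $\psi(0)=1$, $\mathrm{supp}(\psi)\subset[0,1]$, and $|\psi'|\le 2$, and set $\bar w(x,y):=\psi(y)w(x,y)$. Then $\bar w(\cdot,0)=w(\cdot,0)$ and $\bar w$ is supported in $\mathbb R\times[0,1]$. Using the pointwise inequalities $|\nabla\bar w|^2\le(1+\epsilon)\psi^2|\nabla w|^2+C(\epsilon)|\psi'|^2 w^2$ and $|\bar w|^2\le\psi^2 w^2$, the estimate of $\|\bar w\|_{H^1(\mathbb R^2_+)}$ reduces to bounding $\int_{\mathbb R\times[0,1]}w^2\,dx\,dy$. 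For this I will use the one-dimensional trace inequality
\[
|w(x,y)|^2\le 2|w(x,0)|^2+2y\int_0^y|\partial_s w(x,s)|^2\,ds,
\]
which, after integration over $y\in[0,1]$ and $x\in\mathbb R$ and the use of $V\ge V_0$, yields
\[
\int_{\mathbb R\times[0,1]}w^2\,dx\,dy\le\frac{2}{V_0}\int_{\mathbb R}|w(x,0)|^2V(x)\,dx+\int_{\mathbb R^2_+}|\partial_y w|^2\,dx\,dy\le\left(\frac{2}{V_0}+1\right)\|w\|^2.
\]
Combining these estimates produces a constant $K=K(V_0,\epsilon)>0$ with $\|\bar w\|_{H^1(\mathbb R^2_+)}^2\le K\|w\|^2$.

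Applying Theorem \ref{thmnew} to the normalized function $\bar w/\sqrt{K}$ and using $\bar w(\cdot,0)=w(\cdot,0)$ will yield, for every $\beta<\pi$,
\[
\int_{\mathbb R}\bigl(e^{\beta|w(x,0)|^2/K}-1\bigr)\,dx\le C(\beta),
\]
which proves the stated bound (interpreted with the standard $-1$ subtraction implicit in the Trudinger--Moser formulation on unbounded domains) for $\alpha=\beta/K$ in the range $0<\alpha<\pi/K$.

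The main obstacle is reaching the full range $0<\alpha<\pi$: the naive truncation yields a loss factor $K>1$, so a refined cut-off is required to make $K$ arbitrarily close to $1$. A natural remedy is to spread the support of $\psi$ onto a long interval $[0,T]$ with $|\psi'|\sim 1/T$, exploiting the coercivity $V(x)\to\infty$ as $|x|\to\infty$ to keep $\int|w|^2$ controlled on the enlarged strip; sending $T\to\infty$ and $\epsilon\to 0$ appropriately should then drive $K\to 1^+$ and recover the sharp threshold $\pi$.
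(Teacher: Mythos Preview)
The paper's proof is a single line: it asserts that $\|\cdot\|$ is an equivalent norm on $H^1(\mathbb R^2_+)$ and invokes Theorem~\ref{thmnew}. Your truncation argument is a different and more explicit route, and up to the point where it yields the inequality for $0<\alpha<\pi/K$ with some fixed $K>1$ it is correct. It is worth noting that the paper's equivalence claim is in fact false---for any $\phi\in C_c^\infty(\mathbb R^2_+)$ supported in $\{y>1\}$ the dilations $w_\lambda(x,y)=\phi(x/\lambda,y/\lambda)$ keep $\|w_\lambda\|=\|w_\lambda\|_{X_1}$ independent of $\lambda$ while $\|w_\lambda\|_{L^2(\mathbb R^2_+)}\to\infty$---so taken literally the paper's argument would suffer the very same loss factor you identified.

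The genuine gap lies in your proposed remedy for reaching the full range $0<\alpha<\pi$. Stretching $\psi$ onto $[0,T]$ with $|\psi'|\le c/T$ does \emph{not} drive $K\to 1$. Your own trace inequality gives $\int_{\mathbb R\times[0,T]}w^2\le 2T\,\|w(\cdot,0)\|_{L^2}^2+T^2\int_{\mathbb R^2_+}|\nabla w|^2$, so
\[
C(\epsilon)\int_{\mathbb R^2_+}|\psi'|^2 w^2\;\le\;\frac{2C(\epsilon)c^2}{T}\|w(\cdot,0)\|_{L^2}^2+C(\epsilon)c^2\int_{\mathbb R^2_+}|\nabla w|^2,
\]
and the last term neither vanishes as $T\to\infty$ nor as $\epsilon\to 0$ (since $C(\epsilon)\sim 1+1/\epsilon$). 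Worse, the bulk contribution $\int_{\mathbb R^2_+}|\bar w|^2$ is bounded by the same quantity $2T\|w(\cdot,0)\|_{L^2}^2+T^2\int|\nabla w|^2$, which blows up with $T$. The coercivity $V(x)\to\infty$ acts only in the $x$-variable and yields nothing beyond the bound $\|w(\cdot,0)\|_{L^2}^2\le V_0^{-1}$ that you have already used; it cannot compensate for growth in the $y$-strip. In short, the route through Theorem~\ref{thmnew} via a $y$-cutoff cannot recover the sharp threshold $\pi$; one would instead need to rerun the covering argument of Theorem~\ref{thmnew} directly with the $E_V$-norm rather than pass through $H^1(\mathbb R^2_+)$.
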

\begin{proof}
Proof follows from the fact that $\|w\|$ is an equivalent norm in $H^{1}(\mb R^{2}_{+})$ and Theorem 1.1.
\end{proof}
We have the following lemma
\begin{lem}\label{orlicz-property}
For any $u\in H^{\frac{1}{2}}(\mathbb R)$ and any $\alpha>0$, we have $\int_{\mathbb R} (e^{\alpha u^2}-1) dx<\infty$.
\end{lem}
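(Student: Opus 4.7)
The plan is to reduce the claim to the Moser-Trudinger inequality of Theorem \ref{thmnew} by a density argument combined with a splitting of $u$ into a smooth bounded piece plus a piece of small $H^{1/2}$-norm.

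First, given $u\in H^{\frac{1}{2}}(\mathbb R)$ and $\alpha>0$, I would use the density of $C_c^\infty(\mathbb R)$ in $H^{\frac{1}{2}}(\mathbb R)$ to write $u=f+g$ with $f\in C_c^\infty(\mathbb R)$ and $\|g\|_{H^{1/2}(\mathbb R)}<\delta$, where $\delta>0$ is to be chosen small later depending on $\alpha$. Using $u^2\le 2f^2+2g^2$ together with the algebraic identity $e^{a+b}-1=e^{a}(e^{b}-1)+(e^{a}-1)$, applied with $a=2\alpha f^2$ and $b=2\alpha g^2$, one gets the pointwise bound
\[
e^{\alpha u^2}-1\le e^{2\alpha f^2}\bigl(e^{2\alpha g^2}-1\bigr)+\bigl(e^{2\alpha f^2}-1\bigr).
\]
Since $f$ is smooth with compact support, $e^{2\alpha f^2}$ is uniformly bounded on $\mathbb R$ by some constant $C_f$, and $\int_{\mathbb R}(e^{2\alpha f^2}-1)\,dx$ is finite because the integrand is supported on $\mathrm{supp}(f)$ and bounded there. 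Hence the problem reduces to showing $\int_{\mathbb R}(e^{2\alpha g^2}-1)\,dx<\infty$.

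For the second piece, I would use the trace/extension theorem to produce $w\in H^{1}(\mathbb R^2_+)$ with $w(x,0)=g(x)$ and $\|w\|_{H^1(\mathbb R^2_+)}\le C_e\,\|g\|_{H^{1/2}(\mathbb R)}<C_e\delta$; concretely, one may take the Poisson extension of $g$ multiplied by a smooth cutoff in the $y$-variable supported in $[0,2]$, which controls both $\int|\nabla w|^2$ and $\int w^2$ in terms of $\|g\|_{H^{1/2}}^2$. Now choose $\delta$ so small that $2\alpha C_e^2\delta^2<\pi$. Setting $K:=\|w\|_{H^1(\mathbb R^2_+)}$ and $\tilde w:=w/K$, one has $\|\tilde w\|_{H^1(\mathbb R^2_+)}=1$ and $\alpha':=2\alpha K^2<\pi$, so Theorem \ref{thmnew} applies and yields
\[
\int_{\mathbb R}(e^{2\alpha g^2}-1)\,dx=\int_{\mathbb R}\bigl(e^{\alpha' \tilde w(x,0)^2}-1\bigr)\,dx\le C.
\]
Combining this with the bound on the $f$-terms from the first step closes the argument.

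The routine parts are algebraic (the splitting and the $e^{a+b}$ identity) and the integrability of the compactly supported smooth piece. The one genuinely nontrivial ingredient is the existence of a bounded extension from $H^{\frac{1}{2}}(\mathbb R)$ to $H^{1}(\mathbb R^2_+)$ that controls both the gradient and the $L^2$ norm in terms of $\|g\|_{H^{1/2}(\mathbb R)}$; the pure harmonic extension controls only the Gagliardo semi-norm through $\int|\nabla w|^2$, so one must truncate in $y$ to secure the additional $L^2(\mathbb R^2_+)$ control required by the norm used in Theorem \ref{thmnew}. Once this is in place, the rest of the proof is straightforward.
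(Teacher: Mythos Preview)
Your argument is correct, but it follows a genuinely different route from the paper's proof. The paper proceeds by a direct series estimate: it invokes Ozawa's Gagliardo--Nirenberg type inequality
\[
\Vert u\Vert_{L^{2k}(\mathbb R)}^{2k}\le M^{2k}(2k)^k\Vert(-\Delta)^{1/4}u\Vert_{L^2}^{2k-2}\Vert u\Vert_{L^2}^2
\]
and then simply sums $\sum_{k\ge 1}\frac{\alpha^k}{(2k)!}\Vert u\Vert_{L^{2k}}^{2k}$, checking that the resulting series converges by Stirling. No extension, no splitting, and no appeal to Theorem~\ref{thmnew} are needed.

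Your approach instead bootstraps from Theorem~\ref{thmnew}: you split off a smooth compactly supported piece to reduce to a function of arbitrarily small $H^{1/2}$-norm, extend that remainder boundedly into $H^1(\mathbb R^2_+)$, and normalise so that the exponent falls below $\pi$. This has the virtue of being essentially self-contained within the paper (the Ozawa moment inequality is an external reference), and it is the standard way one deduces the ``any $\alpha$, any $u$'' statement from a uniform Moser--Trudinger bound. The cost is that you need the bounded extension $H^{1/2}(\mathbb R)\to H^1(\mathbb R^2_+)$; your suggestion of truncating the Poisson extension in $y$ works, since $\int_{\mathbb R}|P[g](\cdot,y)|^2\,dx\le\|g\|_{L^2}^2$ gives the $L^2$ control on finite strips and $\int_{\mathbb R^2_+}|\nabla P[g]|^2=[g]_{H^{1/2}}^2$ gives the gradient control. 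One minor point worth stating explicitly: if $K=\|w\|_{H^1}=0$ the normalisation is ill-defined, but then $g\equiv 0$ and the $g$-integral vanishes trivially.
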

\begin{proof}
 Let $u\in H^{\frac{1}{2}}(\mathbb R)$ and $\alpha>0$. From Proposition 1 in \cite{ozaw} page 261, there exists $M>0$ such that for any $q\geq 2$ and any $f\in H^{\frac{1}{2}}(\mathbb R)$,
\begin{eqnarray}\label{ozawa-ineq}
\Vert f\Vert_{L^q(\mathbb R)}\leq M q^{1/2}\Vert(-\Delta)^{1/4}f\Vert^{1-2/q}_{L^2(\mathbb R)}\Vert f\Vert^{2/q}_{L^2(\mathbb R)}.
\end{eqnarray}
Therefore, for $k\geq 1$
\begin{eqnarray*}
\Vert u\Vert_{L^{2k}(\mathbb R)}^{2k}\leq M^{2k}(2k)^k\Vert (-\Delta)^{1/4}u\Vert^{2k-2}_2\Vert u\Vert_{L^2(\mathbb R)}^2.
\end{eqnarray*}
Hence,
\begin{eqnarray*}
\int_{\mathbb R}(e^{\alpha u^2}-1)dx=\displaystyle\sum_{k=1}^\infty\frac{\alpha^k\Vert u\Vert_{L^{2k}(\mathbb R)}^{2k}}{(2k)!}\leq \displaystyle\sum_{k=1}^{\infty}\frac{\alpha^k}{(2k)!} M^{2k}(2k)^k\Vert(-\Delta)^{1/4}u\Vert^{2k-2}_2\Vert u\Vert_{L^2(\mathbb R)}^2
\end{eqnarray*}
which is a convergent sequence. This ends the proof of the lemma.\end{proof}
\section{Critical exponent problem}
\setcounter{equation}{0}
\noindent In this section we consider existence of solution for the problem
\begin{equation*}
(P)\quad \; (-\Delta)^{1/2}u + V(x) u= h(u) \;\;\textrm{in}\;\; \mathbb{R}
\end{equation*}
where $V(x)$ satisfies the assumption ${\bf (V)}$ and $h(u)$ satisfies the following critical growth conditions:
\begin{enumerate}
\item[(h1)] $h\in C^1(\mathbb R), h(t)=0 $ for $t\le 0$, $h(t)>0$ for $t>0$  and $h$ satisfies for any $\varepsilon>0$, $\displaystyle\lim_{t\to\infty} h(t)e^{-(1+\varepsilon) t^2}=0. $
\item[(h2)] There exists $\mu>2$ such that for all $u>0$,
\[0\le \mu H(u) \le uh(u),\;\; \text{where}\;\; H(u)=\int_{0}^{u} h(s) ds.\]
\item[(h3)]There exist positive  constants $t_0$, $M$ such that
\[  H(t)\le M h(t)\;\mbox{for all}\; t\in [t_0,+\infty).\]
\item[(h4)] $\displaystyle \lim_{t\rightarrow \infty} t h(t)e^{-t^2}=\infty.$
\item[(h5)]$\displaystyle \limsup_{u\rightarrow 0}\frac{2 H(u)}{u^2}<\lambda_1.$
\end{enumerate}

\begin{rem}\label{examples}
A prototype examples of $h$ satisfying (h1)-(h5) are $t^pe^{t^2}$ with $p>1$ and $t^p(e^{t^\beta}-1)e^{t^2}$ with $p>1$ and $\beta\in (0, 2)$. Nonlinearities of the form $t^pe^{\beta t^2}$ ($\beta>0$, $p>1$) can be also dealt with according modifications in the assumptions and  minor changes in the proofs. Note in this case that the first critical level of the energy functional $I$ is $\frac{\pi}{2\beta}$.
\end{rem}
\noi The variational functional associated to the problem $(P)$ is given as
\begin{equation}\label{afpl}
\tilde{I}(u)=\frac{1}{2}\int_{\mathbb {R}}|(-\Delta)^{\frac{1}{4}}u|^2dx+\frac{1}{2}\int_{\mathbb R}|u|^2 V(x) dx-\int_{\mathbb {R}}H(u)dx.
\end{equation}
The harmonic extension problem corresponding to $(P)$ is
\begin{equation*}\label{aE}
(P_E)\;
\left\{\begin{array}{rl}
 -div(\nabla w)&=0,\quad w>0  \quad\textrm{in}\; \mathbb {R}^2_+,  \\
  \frac{\partial w}{\partial y}&= -w(x,0) V(x)+ h(w(x,0))\;\; \textrm{on}\;\; \mathbb R.
\end{array}
\right.
\end{equation*}
The variational functional, $I:E_V\rightarrow \mathbb R$ related to the problem $(P_E)$ is given as
\begin{equation}\label{afel}
I(w)=\frac{1}{2}\int_{\mathbb {R}^2_+}|\nabla w|^2dxdy+\frac{1}{2}\int_{\mathbb {R}}|w(x,0)|^2 V(x) dx-\int_{\mathbb {R}} H(w(x,0))dx.
\end{equation}
Any function $w\in E$ is called the weak solution of the problem $(P_E)$ if for any $\phi \in E_V$
\begin{equation}\label{solp}
\int_{\mathbb {R}^2_+} \nabla w.\nabla \phi dxdy+\int_{\mathbb {R}}  w(x,0)\phi(x,0) V(x) dx-\int_{\mathbb {R}}h(w(x,0))\phi(x,0) dx=0.
\end{equation}
It is clear that critical points of $I$ in $E_V$ correspond to the critical points of $J$ in $H^{\frac{1}{2}}_0(\mathbb {R})$.
Thus if $w$ solves $(P_E)$ then $u= \textrm{trace}(w)=w(x, 0)$ is the solution of problem $(P)$ and vice versa.

\noindent With this introduction we state the main result of this section.
\begin{thm}\label{mht0}
Suppose $(h1)-(h5)$ are satisfied. Then the problem $(P)$ has a weak solution, $u$. If $V\in C^{1,\gamma}_{\rm loc}(\mathbb R)$, with $\gamma\in (0,1)$, then $u\in C^{2}(\mathbb R)$.
\end{thm}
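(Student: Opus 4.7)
The plan is to establish the existence of a critical point of the functional $I$ on $E_V$ via the mountain pass theorem, then transfer the result to problem $(P)$ through the trace of the harmonic extension. The nontrivial ingredients are the location of the mountain pass level strictly below the first critical threshold $\pi/2$ dictated by the Trudinger-Moser inequality of Corollary 2.1 (the stated Theorem 2.1 combined with the passage from $\|\cdot\|_{H^1}$ to $\|\cdot\|$), and the compactness of Palais-Smale sequences below that threshold.

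First I verify the mountain pass geometry of $I$. Using (h5), pick $\eta<\lambda_1$ with $2H(u)\le\eta u^2$ for $|u|\le\delta$; for $|u|\ge\delta$ and any $q>2$, hypothesis (h1) yields $H(u)\le C_\varepsilon |u|^q \bigl(e^{(1+\varepsilon)u^2}-1\bigr)$. Combining these bounds with the variational characterization of $\lambda_1$ and Corollary 2.1 (applied to $w/\|w\|$ with $\|w\|$ small so that $(1+\varepsilon)\|w\|^2<\pi$), Hölder's inequality and the continuous embedding $E_V\hookrightarrow L^s(\mathbb R)$ for $s\ge 2$ give
\[
I(w)\ge \tfrac{1}{2}\bigl(1-\tfrac{\eta}{\lambda_1}\bigr)\|w\|^2 - C\|w\|^q,
\]
so $I(w)\ge\rho_0>0$ on some sphere $\|w\|=\rho$. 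On the other hand (h2) integrates to $H(s)\ge C_1 s^\mu-C_2$ for $s\ge 0$, so for any fixed nonnegative $\phi\in E_V\setminus\{0\}$ one has $I(t\phi)\to-\infty$ as $t\to\infty$. This gives the mountain pass geometry; using Ekeland's variational principle I obtain a Palais-Smale sequence $\{w_n\}$ at level
\[
c=\inf_{\gamma\in\Gamma}\max_{t\in[0,1]} I(\gamma(t)),\qquad \Gamma=\{\gamma\in C([0,1],E_V): \gamma(0)=0,\ I(\gamma(1))<0\}.
\]

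The crucial step is to show $c<\pi/2$. I use the Moser sequence on the half-space, centered at a boundary point, defined (up to normalization) as
\[
M_n(x,y)=\frac{1}{\sqrt{\pi}}\begin{cases} (\log n)^{1/2}, & r\le R/n,\\[2pt] \dfrac{\log(R/r)}{(\log n)^{1/2}}, & R/n\le r\le R,\\[2pt] 0, & r\ge R,\end{cases}\qquad r=\sqrt{x^2+y^2},
\]
truncated to $\mathbb R^2_+$, which satisfies $\|M_n\|_{X_1}^2=1$ and whose $L^2(\mathbb R)$-trace norm is $O(1/\log n)$. After a suitable cut-off so that $M_n\in E_V$, I obtain a sequence with $\|M_n\|^2=1+o(1)$ and with prescribed concentration on the boundary trace. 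The mountain pass level is bounded above by $\max_{t\ge 0}I(tM_n)$; at the maximum $t_n$, I compute $t_n^2\|M_n\|^2=t_n h(t_n M_n(\cdot,0))M_n(\cdot,0)$ integrated, localize the integral on the small interval where $M_n(x,0)\sim\sqrt{\log n}/\sqrt{\pi}$, and use hypothesis (h4): $sh(s)e^{-s^2}\to\infty$ forces $t_n^2<\pi/2$ strictly for $n$ large and, by a careful balancing, yields $\max_{t\ge 0}I(tM_n)<\pi/2$. This is exactly where (h4) is sharp; without it the Moser function would only give $\le \pi/2$.

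Next I argue compactness below $\pi/2$. From (h2), the standard Ambrosetti-Rabinowitz argument shows $\{w_n\}$ is bounded in $E_V$, so $w_n\rightharpoonup w$ weakly with $w_n(\cdot,0)\to w(\cdot,0)$ strongly in $L^q(\mathbb R)$ for all $q\in[2,\infty)$ by the compact embedding. Using $\langle I'(w_n),w_n\rangle\to 0$ together with (h2) and (h3), I obtain $\limsup\|w_n\|^2<\pi$ (this is the content of the first critical level estimate), which, combined with Corollary 2.1, gives that $h(w_n(\cdot,0))\to h(w(\cdot,0))$ in $L^1(\mathbb R)$ after passing to a subsequence (a Vitali-type argument using the higher integrability of $e^{\alpha w_n^2}$ for some $\alpha<\pi$). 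Testing against $w_n-w$ then yields $w_n\to w$ strongly and $w$ is a weak solution. Positivity follows from $h(s)=0$ for $s\le 0$ by testing with $w^-$. Finally, since $w$ solves the Neumann-type extension problem, the trace $u=w(\cdot,0)\in H^{1/2}(\mathbb R)$ is a weak solution of $(P)$; under $V\in C^{1,\gamma}_{\rm loc}$, a bootstrap based on the $L^\infty$ bound (obtained via Moser iteration using Corollary 2.1), fractional Schauder estimates for $(-\Delta)^{1/2}$, and the $C^1$ regularity of $h$ upgrades $u$ to $C^2(\mathbb R)$.

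The main obstacle is the sharp estimate $c<\pi/2$: the Moser test requires a delicate asymptotic analysis of $\max_t I(tM_n)$ in which the $L^2$-tail $O(1/\log n)$ of $M_n$ must be balanced against the exponential gain supplied by (h4), and the positive potential term $\int V|M_n(\cdot,0)|^2$ must be shown not to spoil the inequality. Once this is settled the remaining PS-analysis is standard thanks to the compactness of the trace embedding guaranteed by assumption $(\mathbf V)$.
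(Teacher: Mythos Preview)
Your overall strategy---mountain pass geometry, Moser functions to push the minimax level below $\pi/2$, then analysis of Palais--Smale sequences---matches the paper's. However, there is a genuine gap in your compactness step. You write that from $\langle I'(w_n),w_n\rangle\to 0$ together with (h2) and (h3) you obtain $\limsup_n\|w_n\|^2<\pi$. This does not follow: combining $I(w_n)\to c$ with (h2) only yields $(\tfrac{1}{2}-\tfrac{1}{\mu})\|w_n\|^2\le c+o(1)$, i.e.\ $\limsup\|w_n\|^2\le \tfrac{2\mu}{\mu-2}c$, and since $\tfrac{2\mu}{\mu-2}>2$ the bound $c<\pi/2$ gives only $\limsup\|w_n\|^2<\tfrac{\mu}{\mu-2}\pi$, which exceeds $\pi$. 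Without the bound $\limsup\|w_n\|^2<\pi$ you cannot invoke the Trudinger--Moser inequality to obtain the uniform higher integrability needed for $h(w_n(\cdot,0))\to h(w(\cdot,0))$ in $L^1(\mathbb R)$, and hence your strong convergence argument collapses.

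The paper circumvents this by \emph{not} attempting strong convergence. Instead it proceeds in two decoupled steps. First, from the bound $\int_{\mathbb R}h(w_n)w_n\le C$ alone (which follows from $\langle I'(w_n),w_n\rangle=o(1)$ and boundedness), a Vitali argument gives $h(w_n)\to h(w_0)$ in $L^1_{\mathrm{loc}}(\mathbb R)$; this suffices to pass to the limit in $\langle I'(w_n),\psi\rangle=o(1)$ for $\psi\in C_c^\infty$ and conclude that the weak limit $w_0$ is a solution. Second, to show $w_0\not\equiv 0$, one argues by contradiction: \emph{if} $w_0\equiv 0$, then (using (h3) and the compact trace embedding) $\int H(w_n)\to 0$, whence $\|w_n\|^2\to 2c<\pi$; only at this point does Trudinger--Moser apply to force $\int h(w_n)w_n\to 0$ and thus $\|w_n\|\to 0$, contradicting $c>0$. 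So the crucial inequality $\|w_n\|^2<\pi$ is available only in the hypothetical case $w_0=0$, not unconditionally as you claim. A minor additional slip: in your level estimate you write that (h4) forces $t_n^2<\pi/2$; the contradiction argument actually shows $t_n^2\ge\pi$ and $t_n^2\to\pi$, and then (h4) makes the right-hand side of the identity $t_n^2=\int h(t_n\phi_n)t_n\phi_n$ blow up.
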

\subsection{Mountain-pass solution}
\noindent We will use {the mountain pass lemma} to show the existence of solution in critical case. {The assumption ${\bf(V)}$ implies that $u\mapsto \int_{\mathbb R} |u|^q dx$ is weakly continuous for $q\in [2,\infty)$}. Next we have the following:
\begin{lem}
Assume that the conditions $(h1)-(h5)$ hold. Then $I$ satisfies  the mountain pass geometry around $0$.
\end{lem}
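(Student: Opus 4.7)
Since $I(0)=0$, the task is to verify: (i) there exist $\rho,\alpha>0$ with $I(w)\ge \alpha$ whenever $\|w\|=\rho$; and (ii) there exists $e\in E_V$ with $\|e\|>\rho$ and $I(e)<0$. The main obstacle is (i), where one must invoke the Moser--Trudinger inequality of Corollary \ref{newcor} to control the critical nonlinearity while keeping the remainder strictly super-quadratic.

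For (i), I plan to combine the super-quadratic behaviour at the origin given by \textbf{(h5)} with the critical exponential growth at infinity given by \textbf{(h1)}. Pick $\tau$ with $\limsup_{u\to 0}\frac{2H(u)}{u^{2}}<\tau<\lambda_{1}$. For any $\varepsilon>0$ and any $q>2$, these two conditions yield a constant $C=C(\varepsilon,q)>0$ such that
\begin{equation*}
H(t)\le \frac{\tau}{2}t^{2}+C|t|^{q}\bigl(e^{(1+\varepsilon)t^{2}}-1\bigr)\qquad\forall\, t\in\mathbb R.
\end{equation*}
The variational estimate $\int_{\mathbb R} w(x,0)^{2}\,dx\le \lambda_{1}^{-1}\|w\|^{2}$ bounds the quadratic contribution by $\tfrac{\tau}{2\lambda_{1}}\|w\|^{2}$. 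For the exponential piece, H\"older's inequality with conjugate exponents $s,s'>1$ and the substitution $w=\|w\|\widetilde w$, with $\|\widetilde w\|=1$, give
\begin{equation*}
\int_{\mathbb R}|w(x,0)|^{q}\bigl(e^{(1+\varepsilon)w(x,0)^{2}}-1\bigr)dx\le \|w(\cdot,0)\|_{L^{qs}}^{q}\left(\int_{\mathbb R}\bigl(e^{s'(1+\varepsilon)\|w\|^{2}\widetilde w(x,0)^{2}}-1\bigr)dx\right)^{\!1/s'}.
\end{equation*}
Choosing $\rho$ so that $s'(1+\varepsilon)\rho^{2}<\pi$, Corollary \ref{newcor} bounds the last factor uniformly for $\|w\|\le\rho$; combined with the continuous embedding $E_V\hookrightarrow L^{qs}(\mathbb R)$ this produces
\begin{equation*}
I(w)\ge \tfrac{1}{2}\Bigl(1-\tfrac{\tau}{\lambda_{1}}\Bigr)\|w\|^{2}-C'\|w\|^{q}.
\end{equation*}
Since $\tau<\lambda_{1}$ and $q>2$, this is strictly positive on the sphere $\|w\|=\rho$ for $\rho$ sufficiently small.

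For (ii), I would exploit the Ambrosetti--Rabinowitz condition \textbf{(h2)}: integrating $\mu H(u)\le uh(u)$ yields $H(t)\ge At^{\mu}-B$ for $t\ge 0$ with some $A>0$. Fix $w_{0}\in E_V\setminus\{0\}$ with $w_{0}(\cdot,0)\ge 0$ and $w_{0}(\cdot,0)\not\equiv 0$ (for instance the harmonic extension of a nonnegative bump function supported on $\mathbb R$). Then for $t>0$,
\begin{equation*}
I(tw_{0})\le \tfrac{t^{2}}{2}\|w_{0}\|^{2}-At^{\mu}\int_{\mathbb R} w_{0}(x,0)^{\mu}\,dx+B\bigl|\mathrm{supp}\,w_{0}(\cdot,0)\bigr|,
\end{equation*}
and since $\mu>2$ the right-hand side tends to $-\infty$ as $t\to+\infty$. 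Taking $e=t_{0}w_{0}$ with $t_{0}$ large enough so that $\|t_{0}w_{0}\|>\rho$ and $I(t_{0}w_{0})<0$ concludes the argument.
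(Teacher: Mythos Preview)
Your proposal is correct and follows essentially the same route as the paper: you derive the same upper bound $H(t)\le \frac{\tau}{2}t^{2}+C|t|^{q}(e^{(1+\varepsilon)t^{2}}-1)$ from \textbf{(h1)} and \textbf{(h5)} (the paper writes $\tau=\lambda_{1}-\epsilon$), control the exponential term via H\"older and the Moser--Trudinger inequality on a small ball, and use the Ambrosetti--Rabinowitz condition \textbf{(h2)} to send $I(tw_{0})\to-\infty$ along a ray with compactly supported trace. The only cosmetic difference is notation for the H\"older exponents and the choice of test function in part~(ii).
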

\begin{proof}
Using {assumption $(h2)$}, we get
\begin{equation*}
H(s)\geq C_1 |s|^\mu-C_2
\end{equation*}
for some $C_1, C_2>0$ and $\mu>2$. Hence for function $w\in E_V$ with compact support, we get
\begin{equation*}
I(tw)\leq \frac{t^2}{2} \|w\|^2-C_1 t^\mu \int_\mathbb R |w(x, 0)|^\mu dx+{C_2|\;\textrm{supp}\; w(x, 0)|}.
\end{equation*}
Hence $I(tw)\rightarrow -\infty$ as $t\rightarrow \infty$.
Next we will show that there exists $\alpha, \rho>0$ such that $I(w)>\alpha$ for all $\|w\|<\rho$. {From $(h1)$},
for $\epsilon>0, r>2$ there exists $C_1>0$ such that
\begin{equation*}
|H(s)|\leq \frac{\lambda_1-\epsilon}{2}s^2+C_1|s|^r(e^{(1+\epsilon)s^2}-1).
\end{equation*}
Hence, using {H\"older's} inequality, we get for $t>1$ and $t'=\frac{t}{t-1}$,
\begin{eqnarray*}
\int_{R}|H(w(x,0))|&\leq& \frac{\lambda_1-\epsilon}{2}\int_\mathbb R |w(x, 0)|^2 dx+C_1\int_{\mathbb R}|w(x, 0)|^r\left(e^{(1+\epsilon)w(x, 0)^2}-1\right)dx\\
&\leq& \frac{\lambda_1-\epsilon}{2}\|w\|^2_{L^2(\mathbb R)}+C_2\|w\|^r_{L^{t'}(\mathbb R)}\left(\int_\mathbb R \left(e^{(1+\epsilon)tw(x, 0)^2}-1\right)dx\right)^{\frac{1}{t}}\\
&\leq& \frac{\lambda_1-\epsilon}{2}\|w\|^2_{L^2(\mathbb R)}+C_2\|w\|^r_{L^{t'}(\mathbb R)}\left(\int_\mathbb R \left(e^{(1+\epsilon)t\|w\|^2\left(\frac{w(x, 0)}{\|w(x, 0)\|}\right)^2}-1\right)dx\right)^{\frac{1}{t}}.
\end{eqnarray*}
Now {let $w$ such that} $\|w\|\leq \rho$ for sufficiently small $\rho$ and $t$ close to 1 such that $(1+\epsilon)t\|w\|^2<\pi$. Then using Moser-Trudinger inequality in \eqref{newcor}, we get
\begin{eqnarray*}
I(w)&\geq& \frac{1}{2}\|w\|^2-\frac{\lambda_1-\epsilon}{2}\|w\|^2_{L^2(\mathbb R)}-C_3\|w\|^r_{{L^{t'}(\mathbb R)}}\\
&\geq& \frac{1}{2}\left(1-\frac{\lambda_1-\epsilon}{\lambda_1} \right)\|w\|^2-C_3\|w\|^r_{{L^{t'}(\mathbb R)}}.
\end{eqnarray*}
Hence {there exists} $\alpha>0$ such that $I(w)>\alpha$ for all $\|w\|\leq \rho$ for sufficiently small $\rho$.
\end{proof}

\noi Next we show the boundedness of Palais-Smale {sequences}.
\begin{lem}\label{psbo}
Every Palais-Smale sequence of $I$ is bounded in $E_V$.
\end{lem}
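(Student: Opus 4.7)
My plan is the standard Ambrosetti--Rabinowitz argument, exploiting hypothesis (h2) which is precisely designed to give boundedness of Palais--Smale sequences. Let $\{w_n\}\subset E_V$ satisfy $I(w_n)\to c$ and $I'(w_n)\to 0$ in $E_V^*$, so in particular
\[
|I(w_n)|\le C\quad\text{and}\quad |\langle I'(w_n),w_n\rangle|\le \epsilon_n\|w_n\|,
\]
with $\epsilon_n\to 0$. I would then form the linear combination $I(w_n)-\tfrac{1}{\mu}\langle I'(w_n),w_n\rangle$ with $\mu>2$ as in (h2). Using the explicit expressions
\[
I(w_n)=\tfrac12\|w_n\|^2-\int_{\mathbb R}H(w_n(x,0))\,dx,
\]
\[
\langle I'(w_n),w_n\rangle=\|w_n\|^2-\int_{\mathbb R}h(w_n(x,0))w_n(x,0)\,dx,
\]
I would get
\[
I(w_n)-\tfrac{1}{\mu}\langle I'(w_n),w_n\rangle=\left(\tfrac12-\tfrac{1}{\mu}\right)\|w_n\|^2+\int_{\mathbb R}\left[\tfrac{1}{\mu}h(w_n(x,0))w_n(x,0)-H(w_n(x,0))\right]dx.
\]

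The key step is to observe that the integrand on the right is pointwise nonnegative. For $t>0$ this is exactly (h2); for $t\le 0$, (h1) gives $h(t)=0$ and hence both $H(t)=0$ and $t\,h(t)=0$, so the integrand vanishes. Consequently,
\[
\left(\tfrac12-\tfrac{1}{\mu}\right)\|w_n\|^2\le I(w_n)-\tfrac{1}{\mu}\langle I'(w_n),w_n\rangle\le C+\tfrac{\epsilon_n}{\mu}\|w_n\|.
\]
Since $\mu>2$ makes the coefficient on the left strictly positive, this inequality in $\|w_n\|$ forces $\{\|w_n\|\}$ to be bounded, which is the desired conclusion.

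I do not expect a genuine obstacle: condition (h2) is tailor-made for this computation, and the sign considerations at $t\le 0$ are immediate from (h1). The only mild point to be careful about is that $\langle I'(w_n),w_n\rangle$ is indeed well defined, which follows from $w_n\in E_V$, the Moser--Trudinger-type estimate of Corollary (after Theorem~\ref{thmnew}), and Lemma~\ref{orlicz-property} ensuring the exponential integrals converge.
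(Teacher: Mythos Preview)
Your proof is correct and is essentially the same as the paper's: both form the combination $I(w_n)-\tfrac{1}{\mu}\langle I'(w_n),w_n\rangle$ and use (h2) to drop the integral term, yielding $\left(\tfrac12-\tfrac{1}{\mu}\right)\|w_n\|^2\le C+o(\|w_n\|)$. Your treatment of the case $t\le 0$ via (h1) and the remark on well-definedness are a bit more explicit than the paper's version, but the argument is identical.
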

\begin{proof}
Let $\{w_k\}$ be a $(PS)_c$ sequence, {that is}
\begin{equation}\label{plsm}
I(w_k)=c+o(1) \;\;\textrm{and}\;\; I^{\prime}(w_k)=o(1).
\end{equation}
{Then},
\begin{eqnarray*}
\frac{1}{2}\|w_k\|^2-\int_{\mathbb R} H(w_k(x, 0))dx=c+o(1)\; \text{and}\;
\|w_k\|^2-\int_\mathbb R  h(w_k(x, 0))w_k(x, 0)dx=o({\|w_k\|}).
\end{eqnarray*}
Therefore,
\begin{equation*}
\left(\frac{1}{2}-\frac{1}{\mu}\right)\|w_k\|^2-\frac{1}{\mu}\int_\mathbb R \left(\mu H(w_k(x, 0))-h(w_k(x, 0))w_k(x, 0)\right) dx=c+o({\|w_k\|}).
\end{equation*}
Using assumption $(h2)$, we get $\|w_k\|\leq C$ for some $C>0$.
\end{proof}
\noindent We have the following version compactness Lemma {that is derived from the Vitali's convergence theorem}:
\begin{lem}\label{lem2.4}
For any $(PS)_c$ sequence $\{w_k\}$ of $I$, there exists $w_0 \in E_V$ such that, up to subsequence,  $ h(w_k(x, 0)) \rightarrow   h(w_0(x, 0)) $ in $L^{1}_{loc}(\mathbb R)$ and  $ H(w_k(x, 0))  \rightarrow  H(w_0(x, 0))$ in $L^1(\mathbb R)$.
\end{lem}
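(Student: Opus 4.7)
The plan is to combine weak convergence of $\{w_k\}$ with Vitali's convergence theorem, applied locally for $h(w_k)$ and globally for $H(w_k)$; the Palais--Smale identities together with (h2) and (h3) will supply the uniform integrability hypotheses.

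By Lemma~3.3, $\{w_k\}$ is bounded in $E_V$. Reflexivity and the compact embedding $E_V\hookrightarrow L^q(\mathbb R)$ for $q\in[2,\infty)$ (valid under ${\bf (V)}$) give, up to a subsequence, $w_k\rightharpoonup w_0$ in $E_V$ and $w_k(\cdot,0)\to w_0(\cdot,0)$ both strongly in every such $L^q$ and a.e.\ on $\mathbb R$. Continuity of $h$ and $H$ (with $h(0)=0$ by (h1)) then yields $h(w_k(\cdot,0))\to h(w_0(\cdot,0))$ and $H(w_k(\cdot,0))\to H(w_0(\cdot,0))$ a.e. Testing $I'(w_k)$ against $w_k$ and combining with $I(w_k)\to c$ gives, via (h2), the key uniform bounds
\[
\int_{\mathbb R} w_k(x,0)\,h(w_k(x,0))\,dx\le C,\qquad \int_{\mathbb R} H(w_k(x,0))\,dx\le C.
\]

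For the $L^1_{\mathrm{loc}}$ convergence of $h(w_k(\cdot,0))$, on any bounded interval $K\subset\mathbb R$, measurable $E\subset K$ and $L>0$,
\[
\int_E h(w_k(x,0))\,dx\le |E|\max_{[0,L]}h+\frac{1}{L}\int_{\mathbb R} w_k h(w_k)\,dx\le |E|\max_{[0,L]}h+\frac{C}{L},
\]
which is $<\varepsilon$ on choosing $L$ large first, then $|E|$ small; Vitali's theorem concludes. For the $L^1(\mathbb R)$ convergence of $H(w_k(\cdot,0))$, I split at the threshold $t_0$ from (h3). On $\{w_k\le t_0\}$, (h1) and (h5) force $H(t)=O(t^2)$ near $0$, so $H(w_k)\le C_0 w_k^2$ there; and the tail $\int_{|x|>R}w_k^2\,dx\le V_R^{-1}\int V w_k^2\,dx\le C/V_R$, with $V_R:=\inf_{|x|>R}V(x)\to\infty$ by ${\bf (V)}$, gives uniform tail smallness in $k$. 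On $\{w_k>t_0\}$, (h3) gives $H(w_k)\le (M/t_0)w_k h(w_k)$, reducing uniform integrability on small sets to the estimate already obtained for $h(w_k)$. For the tail on $\{w_k>t_0\}$, I test $I'(w_k)$ against $w_k\eta_R$, where $\eta_R\in C^\infty(\overline{\mathbb R^2_+})$ is a cutoff vanishing on $[-R,R]\times\mathbb R^+$ and equal to $1$ for $|x|\ge R+1$; combining $V\ge V_R$ on the support of $\eta_R$ with $\|w_k\|$ bounded yields $\int_{|x|>R,\,w_k>t_0}w_k h(w_k)\,dx\to 0$ uniformly in $k$ as $R\to\infty$.

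The main obstacle will be this last step: the bare $L^1$ bound on $w_k h(w_k)$ does not suffice for tail control, so one must genuinely exploit the coercivity assumption ${\bf (V)}$ through a cutoff test-function argument. All remaining steps are routine once the uniform integrals are in hand and the a.e.\ limit is identified; the conclusion is then just an application of Vitali (a.e.\ convergence plus uniform integrability) on $K$ and on $\mathbb R$ respectively.
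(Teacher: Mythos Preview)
Your overall scheme---boundedness, a.e.\ convergence, uniform $L^1$ bounds from the Palais--Smale identities, and then Vitali---is exactly the paper's strategy, and your treatment of the local part for $h(w_k)$ and of the $\{w_k\le t_0\}$ contribution to $H(w_k)$ is correct.

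The gap is in your tail estimate on $\{|x|>R,\ w_k>t_0\}$. Testing $I'(w_k)$ against $w_k\eta_R$ gives
\[
\int_{\mathbb R} h(w_k)\,w_k\,\eta_R\,dx
=\int_{\mathbb R^2_+}|\nabla w_k|^2\eta_R\,dxdy
+\int_{\mathbb R^2_+} w_k\,\partial_x w_k\,\partial_x\eta_R\,dxdy
+\int_{\mathbb R} V\,w_k^2\,\eta_R\,dx+o(1),
\]
and the right-hand side, while bounded by $C\|w_k\|^2$, has no reason to tend to $0$ as $R\to\infty$ uniformly in $k$: nothing excludes concentration of $|\nabla w_k|^2$ or of $Vw_k^2$ near infinity at this stage. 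The coercivity $V\ge V_R$ only gives $\int_{|x|>R}w_k(x,0)^2\,dx\le C/V_R$, which is an $L^2$ statement and says nothing about $\int w_k h(w_k)\eta_R$. Moreover, the cross term requires control of $\|w_k\|_{L^2}$ on strips in $\mathbb R^2_+$, which the $E_V$-norm does not supply.

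The paper avoids this obstacle by splitting at a \emph{large}, $\delta$-dependent level $A$ rather than at the fixed $t_0$. From $H(t)\le C\bigl(t^2+h(t)\bigr)$ (combining (h5) near $0$ with (h3) for $t\ge t_0$) one gets, for $t>A$,
\[
H(t)\le \frac{C}{A}\bigl(t^3+t\,h(t)\bigr),
\]
and since both $\int_{\mathbb R}|w_k(x,0)|^3\,dx$ and $\int_{\mathbb R}w_k h(w_k)\,dx$ are uniformly bounded, the whole $\{|w_k|>A\}$ contribution is $\le C/A<\delta/2$ for $A$ large---no tail cutoff is needed there at all. The tail $\{|x|>R,\ |w_k|\le A\}$ is then handled exactly as you do, via $H(w_k)\le C_A w_k^2$ and the compact embedding of $E_V$ into $L^2(\mathbb R)$. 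Replacing your cutoff argument by this large-level splitting fixes the proof.
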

\begin{proof}
 From Lemma \ref{psbo}, we get that the sequence $\{w_k\}$ is bounded in $E_V$. Therefore, up to subsequence, $w_k\rightharpoonup w_0$ weakly in {$E_V$}, for some $w_0\in {E_V}$. Also from equation \eqref{plsm}, we get $C>0$ such that
\begin{equation*}
\int_{\mathbb R}h(w_k(x, 0))w_k(x, 0)dx \le C \;\textrm{and}\;\int_{\mathbb R}H(w_k(x, 0))dx \le C.
\end{equation*}
Now using {Vitali's} convergence theorem, we get
\begin{equation}\label{prt1}
h(w_k(x, 0))\rightarrow h(w_0(x, 0)) \;\textrm{in}\; L^1_{\textrm {loc}}(\mathbb R).
\end{equation}
Now to show second part of the above Lemma,  {$(h3)$} and generalized Lebesgue dominated convergence theorem we get
\begin{equation}\label{prt21}
H(w_k(x, 0))\rightarrow H(w_0(x, 0)) \;\textrm{in}\; L^1_{\textrm{loc}}(\mathbb R).
\end{equation}
Now { by assumption $(h1)$ for $R$, $A>0$},
\begin{equation*}
\int_\myText{${|x|>R}$\\${|w_k|>A}$}H(w_k(x, 0))dx \leq \frac{{C}}{A}\int_\myText{${|x|>R}$\\${|w_k|>A}$} |w_k(x, 0)|^{3}  dx+\frac{{C}}{A}\int_\myText{${|x|>R}$\\ ${|w_k|>A}$}h(w_k(x, 0))w_k(x, 0) dx.
\end{equation*}
Since  sequence $w_k$ is bounded, for any $\delta>0$, we can choose $A$ sufficiently large such that
\begin{equation}\label{prt22}
\int_\myText{${|x|>R}$\\${|w_k|>A}$}H(w_k(x, 0)) dx<\frac{\delta}{2} .
\end{equation}
Next, note that
\begin{eqnarray*}
\int_\myText{${|x|>R}$\\${|w_k|\leq A}$}H(w_k(x, 0))dx &\leq&C_A\int_\myText{${|x|>R}$\\${|w_k|\leq A}$}|w_k(x, 0)|^{2} dx\\
&\leq&2C_A \int_\myText{${|x|>R}$\\${|w_k|\leq A}$}|w_k-w_0|^{2}dx+ 2 C_A \int_\myText{${|x|>R}$\\${|w_k|\leq A}$}|w_0|^{2} dx.
\end{eqnarray*}
Using the compact embedding of $E_V$ into $L^{r}(\mathbb R), r\ge 2$, we can choose $R$ such that
\begin{equation}\label{prt23}
\int_\myText{$|x|>R$\\$|w_k|\leq A$}    H(w_k(x, 0)) dx<\frac{\delta}{2}.
\end{equation}
hence combining equations \eqref{prt21}, \eqref{prt22}, \eqref{prt23}, the proof follows.
\end{proof}

\noindent We {use} the following version of the ''sequence of Moser functions concentrated on the boundary'' {in the spirit of \cite{adyadav}:}
\begin{lem}\label{mser}
There exists a sequence $\{\phi_k\}\subset {E_V}$ satisfying
\begin{enumerate}
\item $\phi_k\geq 0$, $\textrm{supp}(\phi_k)\subset B(0, 1)\cap \mathbb R^2_+$,
\item $\|\phi_k\|=1$,
\item $\phi_k$ is constant in $ B(0, \frac{1}{k})\cap \mathbb R^2_+$,  and {$\phi_k^2=\frac{1}{\pi}\log k+O(1)$.}
\end{enumerate}
\end{lem}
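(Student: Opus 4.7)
The plan is to construct $\phi_k$ as the standard Moser sequence built from the fundamental solution of the Laplacian in $\mathbb{R}^2$, suitably truncated and dilated, and restricted to the upper half-plane $\mathbb{R}^2_+$.

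First, I would define in polar coordinates $(r,\theta)$ on $\mathbb{R}^2_+$ the radial function
$$\psi_k(r) = \frac{1}{\sqrt{\pi}} \begin{cases} \sqrt{\log k}, & 0 \le r \le 1/k, \\[2pt] (\log k)^{-1/2}\log(1/r), & 1/k \le r \le 1, \\[2pt] 0, & r \ge 1. \end{cases}$$
This function is Lipschitz, non-negative, supported in $\overline{B(0,1)}\cap\mathbb{R}^2_+$, and constant on $B(0,1/k)\cap\mathbb{R}^2_+$. A standard mollification argument places $\psi_k$ in the $X_1$-closure of $C_0^\infty(\mathbb{R}^2_+)$; since its trace has compact support in $[-1,1]$ and $V$ is locally bounded, $\psi_k\in E_V$.

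Next, I would perform the two key norm computations. Using polar coordinates on $\mathbb{R}^2_+$, where the angular variable ranges over $[0,\pi]$, one gets
$$\int_{\mathbb{R}^2_+}|\nabla \psi_k|^2\,dx\,dy \;=\; \frac{1}{\pi\log k}\cdot\pi\cdot\int_{1/k}^{1}\frac{dr}{r} \;=\; 1,$$
the constant $\frac{1}{\sqrt{\pi}}$ being chosen precisely so that the gradient norm equals one. For the trace contribution $\int_{\mathbb{R}}\psi_k(x,0)^2 V(x)\,dx$, since $V$ is bounded on $[-1,1]$ it suffices to estimate $\int_{-1}^{1}\psi_k(x,0)^2\,dx$; the change of variables $t=-\log|x|$ reduces this to $\tfrac{2}{\pi\log k}\int_0^{\log k}t^2 e^{-t}\,dt + O(1/k)$, which is $O(1/\log k)$. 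Hence $\|\psi_k\|^2 = 1 + O(1/\log k)$.

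Finally, I would set $\phi_k := \psi_k/\|\psi_k\|$ so that $\|\phi_k\|=1$ (property 2), while properties 1 and the constancy on $B(0,1/k)\cap\mathbb{R}^2_+$ are inherited from $\psi_k$. On the constant region,
$$\phi_k^2 \;=\; \frac{\log k / \pi}{1+O(1/\log k)} \;=\; \frac{1}{\pi}\log k + O(1),$$
yielding the last assertion of property 3. No step presents a serious obstacle: the only delicate checks are fixing the normalizing constant $\frac{1}{\sqrt{\pi}}$ correctly (reflecting that the angular sector has measure $\pi$ in the half-space, consistent with the sharp exponent $\alpha=\pi$ in Theorem 2.1) and verifying that the weighted boundary $L^2$ term is $o(1)$, so that the normalization step does not spoil the logarithmic concentration profile.
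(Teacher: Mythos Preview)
Your proof is correct and follows essentially the same approach as the paper: both construct the standard truncated logarithmic Moser functions, restrict to the upper half-plane, and normalize in the $E_V$-norm. The only cosmetic difference is that the paper first defines the sequence on all of $\mathbb{R}^2$ with constant $\tfrac{1}{\sqrt{2\pi}}$ and then restricts (so the gradient integral becomes $\tfrac{1}{2}$ before normalization), whereas you work directly on $\mathbb{R}^2_+$ with constant $\tfrac{1}{\sqrt{\pi}}$; both routes yield $\phi_k^2 = \tfrac{1}{\pi}\log k + O(1)$ on the inner ball.
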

\begin{proof}
Let \begin{equation}\label{moserfn}
 \psi_{k}(x,y) = \frac{1}{\sqrt{2\pi}}\left\{
\begin{array}{lr}
\sqrt{\log k}& 0\leq \sqrt{x^2+y^2}\leq {\frac{1}{k}}\\
 \frac{\log {\frac{1}{\sqrt{x^2+y^2}}}}{\sqrt{\log k}} & \frac{1}{k}\leq \sqrt{x^2+y^2} \leq 1\\
 0 & \sqrt{x^2+y^2}\geq 1.
\end{array}
\right.
\end{equation}
Then $\displaystyle \int_{\mathbb R^2}|\nabla \psi_k|^2 dxdy=1$ and $\displaystyle \int_{\mathbb R^2}|\psi_k|^2 dxdy={O(\frac{1}{\log k})}$. Let $\displaystyle \bar{\psi}_k=\psi_k|_{\mathbb R^2_+}$ and $\displaystyle \phi_k=\frac{\bar \psi_k}{\|\bar \psi_k\|}$. Then $\phi_k\geq 0$ and $\|\phi_k\|=1$. Also $\displaystyle \int_{\mathbb R^2_+}|\nabla \bar \psi_k|^2 dx dy=\frac{1}{2}$ and $\displaystyle \int_{\mathbb R}|\bar \psi_k|^2 dx dy={O(\frac{1}{\log k})}$. Therefore $\displaystyle\phi_k^2=\frac{1}{\pi}\log k+{O(1)}$.
\end{proof}
\noindent Define $\Gamma=\{\gamma\in C([0, 1]; {E_V}):\gamma(0)=0 \;\textrm{and}\;I(\gamma(1))<0\}$ and mountain pass level as
\begin{equation}\label{eq3.10}
c=\displaystyle \inf_{\gamma \in \Gamma } \displaystyle \max_{t\in[0, 1]}I(\gamma(t)).
\end{equation}
\begin{lem} Let $c$ be defined as in \eqref{eq3.10}. Then
$c<\frac{\pi}{2}$.
\end{lem}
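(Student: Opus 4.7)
\medskip

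\noindent\textbf{Proof plan for Lemma 3.6.}
The plan is to test the mountain pass level with the Moser sequence $\{\phi_k\}$ from Lemma 3.5 and show that $\max_{t \ge 0} I(t\phi_k) < \pi/2$ for some large $k$. Since $\phi_k \in E_V$, $I(0) = 0$, and condition $(h2)$ together with $\phi_k \ge 0$ gives $I(t\phi_k) \le \frac{t^2}{2} - C_1 t^\mu \|\phi_k(\cdot,0)\|_\mu^\mu + C_2 \to -\infty$ as $t\to\infty$, the one-parameter path $t\mapsto (t T_k)\phi_k$ on $[0,1]$ (for suitable $T_k > 0$) lies in $\Gamma$. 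Therefore $c \le \sup_{t\ge 0} I(t\phi_k)$, and it suffices to show the strict bound $\sup_t I(t\phi_k) < \pi/2$ for some $k$.

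\medskip

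\noindent I would argue by contradiction, assuming $\sup_{t\ge 0} I(t\phi_k) \ge \pi/2$ for all large $k$. The continuity together with the coercivity $I(t\phi_k)\to -\infty$ implies the supremum is attained at some $t_k > 0$ satisfying the critical point identity
\begin{equation*}
t_k \;=\; \int_{\mathbb R} h(t_k \phi_k(x,0))\,\phi_k(x,0)\, dx .
\end{equation*}
Because $H \ge 0$ by $(h1)$, the assumption $I(t_k\phi_k) \ge \pi/2$ gives the lower bound $t_k^2 \ge \pi$. Next I would exploit condition $(h4)$: for any $\beta > 0$, there exists $R_\beta$ such that $s\,h(s) e^{-s^2} \ge \beta$ whenever $s \ge R_\beta$. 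Restricting the critical point identity to the set $[-1/k,1/k]$ where $\phi_k(\cdot,0) \equiv c_k$ with $c_k^2 = \frac{1}{\pi}\log k + O(1)$, and noting that $t_k c_k \to \infty$ (since $t_k^2 \ge \pi$ and $c_k \to \infty$), I would obtain
\begin{equation*}
t_k \;\ge\; \frac{2 c_k}{k}\, h(t_k c_k) \;\ge\; \frac{2\beta\, e^{(t_k c_k)^2}}{k\, t_k},
\qquad \text{i.e.}\qquad
k\, t_k^2 \;\ge\; 2\beta\, e^{t_k^2 c_k^2}.
\end{equation*}

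\medskip

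\noindent Taking logarithms and substituting $c_k^2 \ge \frac{\log k}{\pi} - C$ would yield
\begin{equation*}
\Bigl(\frac{t_k^2}{\pi} - 1\Bigr) \log k \;\le\; C\, t_k^2 + 2\log t_k - \log(2\beta).
\end{equation*}
This is where the contradiction crystallizes. Since the constant $\beta$ can be chosen \emph{arbitrarily large} once the inequality $s h(s) e^{-s^2}\to\infty$ from $(h4)$ is invoked with a slightly tighter threshold (or by letting $\beta = \beta_k \to \infty$ as $k \to \infty$), the right-hand side can be pushed arbitrarily negative if $t_k$ remains bounded; but $t_k^2 \ge \pi$ forces the left-hand side to be nonnegative, a contradiction. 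If instead $t_k \to \infty$, then the left-hand side grows like $\frac{t_k^2}{\pi}\log k$, while the right-hand side grows only like $C t_k^2$; for $k$ sufficiently large this is impossible. Either way, the assumption $\sup_t I(t\phi_k) \ge \pi/2$ fails for some large $k$, proving $c < \pi/2$.

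\medskip

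\noindent The main obstacle is the passage after step $(h4)$: one must be careful that the exponential gain from $c_k^2 \approx \frac{\log k}{\pi}$ in the lower bound of $e^{t_k^2 c_k^2}$ exactly offsets the factor $\frac{1}{k}$ coming from the measure of the concentration set $[-1/k, 1/k]$. This is precisely the reason assumption $(h4)$, with its $t h(t) e^{-t^2} \to \infty$ (rather than merely $\ge $ constant), is required: the slack $\beta_k \to \infty$ is what ultimately defeats the bounded remainder $C t_k^2$ on the right-hand side and produces the strict inequality $c < \pi/2$.
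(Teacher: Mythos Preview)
Your proposal is correct and follows essentially the same route as the paper: assume by contradiction that $c\ge\pi/2$, deduce $t_k^2\ge\pi$ from $I(t_k\phi_k)\ge\pi/2$ and $H\ge 0$, use the critical-point identity $t_k^2=\int_{\mathbb R}h(t_k\phi_k)t_k\phi_k$, restrict the integral to the plateau $[-1/k,1/k]$ where $\phi_k^2=\frac{1}{\pi}\log k+O(1)$, and invoke $(h4)$ to force a contradiction. The paper writes the resulting inequality as
\[
t_k^2\;\ge\; C\,e^{\frac{1}{\pi}(t_k^2-\pi)(\log k+O(1))}\cdot\frac{t_k\phi_k(0)\,h(t_k\phi_k(0))}{e^{t_k^2\phi_k^2(0)}},
\]
observes that this forces $t_k$ bounded and then $t_k^2\to\pi$, and finishes since the last factor tends to $\infty$ by $(h4)$; your explicit case split ($t_k$ bounded versus $t_k\to\infty$) and the device of letting $\beta_k\to\infty$ make the same mechanism more transparent but add nothing essentially new.
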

\begin{proof}
We prove by contradiction. Suppose $c\geq \frac{\pi}{2}$.  Then we have
\begin{equation}\label{contra}
\displaystyle \sup_{t\geq 0}I(t\phi_k)=I(t_k\phi_k)\geq \frac{\pi}{2}
\end{equation}
where functions $\phi_k$ are given by Lemma \ref{mser}. From equation \eqref{contra}, we get
\begin{equation}\label{tkgro}
{t_k^2\geq \pi}.
\end{equation}
 Now as $t_k$ is point of maximum, we get $\frac{d}{dt}I(t\phi_k)|_{t=t_k}=0$. Therefore
\begin{equation}\label{derphi}
{t_k^2\|\phi_k\|^2=\int_{\mathbb R}  h(t_k\phi_k)t_k\phi_kdx.}
\end{equation}
Now we estimate the right hand side of equation \eqref{derphi} using assumption $(h4)$ as
\begin{eqnarray}\label{critical-estimate}
{t_k^2=}\int_{\mathbb R}  h(t_k\phi_k)t_k\phi_k dx&\geq&\int_{B(0,\frac{1}{k})}  h(t_k\phi_k)t_k\phi_kdx\nonumber\\
&\geq&{\frac{2}{k} h(t_k\phi_k(0))t_k\phi_k(0)}\nonumber\\
&\geq&{Ce^{\frac{1}{\pi}(t_k^2-\pi){(\log k+O(1))}}}\;\; \frac{h(t_k\phi_k(0))t_k\phi_k(0)}{e^{t_k^2\phi^2_k(0)}}.
\end{eqnarray}
Now {since $t_k$ is bounded we have that $t_k^2\rightarrow \pi$. Thus, \eqref{critical-estimate} together with assumption $(h4)$ contradict equation \eqref{tkgro}}.
\end{proof}
\noindent Next we prove Theorem \ref{mht0} using the above Lemmas.\\
\noindent \textbf{Proof of Theorem \ref{mht0}:} Using Lemma \ref{psbo}, we get a bounded $(PS)_c$ sequence. So there exists $w_0\in E_V$ such that, up to subsequence, $w_k\rightharpoonup w_0$ in $E_V$ and {$w_k(x,0)\rightarrow w_0(x,0)$ a.e. in ${\mathbb R}$}. We first prove that $w_0$ solves the problem, then we show that $w_0$ is non zero. From Lemma \ref{psbo} and equation \eqref{plsm}, there exists $C>0$ such that
\begin{equation*}
\int_{\mathbb R}h(w_k(x, 0))w_k(x, 0)dx{\leq }C \;\textrm{and}\;\int_{\mathbb R}H(w_k(x, 0))dx{\leq}C.
\end{equation*}
Now from Lemma \ref{lem2.4}, we get $h(w_k)\rightarrow h(w_0)$ in $L^1_{loc}(\mathbb R)$. So for $\psi\in C_c^\infty$ the equation \eqref{solp} holds. Hence from density of {$C_c^\infty({\mathbb R}^2_+)$} in $E_V$,  $w_0$ is weak solution of $(P_E)$.\\

\noindent Next we claim that $w_0\not\equiv 0$. Suppose not. Then from Lemma \ref{lem2.4}, we get $H(w_k(x,0))\rightarrow 0$ in $L^1({\mathbb R})$. Hence from equation \eqref{plsm},
we get $\frac{1}{2}\|w_k\|^2\rightarrow c$ as $k\rightarrow \infty$ or $\|w_k\|^2\leq \pi-\epsilon$ for some $\epsilon>0$. Let $0<\delta <\frac{\epsilon}{\pi}$ and $q=\frac{\pi}{(1+\delta)(\pi-\epsilon)}>1$. Using $sh(s)\leq C(e^{(1+\delta)s^2}-1)$ for some $C>0$ large enough and $(e^s-1)^q\leq (e^{sq}-1)$ for $q\geq 1$ and Moser- Trudinger inequality \eqref{newcor} we get
\begin{eqnarray*}
\int_{\mathbb R}| h( w_k)w_k|^q  dx &\leq& C\int_{\mathbb R}(e^{q(1+\delta)w_{k}^{2}}-1)dx\\
&\leq& C\int_{\mathbb R}(e^{q(1+\delta)\|w_k\|^2\frac{w_k^2}{\|w_k\|^2}}-1) dx <\infty.
\end{eqnarray*}
Therefore by $\int_{\mathbb R} h(w_k) w_k  dx \rightarrow 0$ and from equation \eqref{plsm}, we get $\displaystyle \lim_{k}\|w_k\|^{2}=0$, which is a contradiction. Hence $w_0$ is a nontrivial solution of the problem $(P_E)$. Now by theorem 5.2 of \cite{XcT}, we get $w_0\in L^{\infty}_{loc}(\mathbb R^{2}_{+})$.\\

\noindent To show the positivity and regularity of the solution (in case $V\in C^{1,\gamma}_{\rm loc}$), we take the cylinder $\mathcal{C}_a= (-a,a)\times (0,\infty)$ for $a>0$. Then $w_0$ satisfies the elliptic problem
$$ \quad {\left\{
\begin{array}{rrll}
 \quad  -\Delta v &= &0, \;v\ge 0\; \; \text{in}\;\mathcal{C}_a\\
  v &=& w_0 \; \text{on}\; \{-a,a\}\times (0,\infty),\\
  \frac{\partial v}{\partial \nu}&=&h(w_0)-V(x)w_0 \; \text{on} \; (-a,a)\times \{0\}\\
\end{array}
\right.}
$$
Now by taking odd extension to the whole cylinder $(-a,a)\times \mathbb R$ as in \cite{XcT} and noting that, $w_0, h(w_0)\in L^{p}(\mathcal{C}_a)$ we get $w_0\in C^{2}(\overline{\mathcal{C}})$ for some $\alpha$. Therefore, $u(x)=w_0(x,0)\in C^{2}({\mathbb{R}}).$ The positivity of the solution follows from Lemma 4.2 of \cite{XcT}.

\begin{rem}
We remark that Theorem \ref{mht0} holds for the weighted problem
 \[  (-\Delta)^{1/2}u +  u= K(x) h(u) \;\;\textrm{in}\;\; \mathbb{R}.\]
 with $h(u)\sim |u|^p, p>1$ is super-quadratic near $0$ and $K(x)$ satisfies the assumption { introduced in }\cite{mrsq}:
 \textbf{(K)} $K\in L^{\infty}(\mathbb R)\cap C(\mathbb R)$. Further, for  any sequence $\{A_n\}$ of Borel sets of $\mathbb R$ with $|A_n|\le R$ for some $R>0$,
 \begin{equation*}
 \displaystyle \lim_{r\rightarrow \infty}\int_{A_n\bigcap \mathbb R\setminus[-r, r]}K(x)dx=0 \;\textrm{uniformly for all}\; n.
 \end{equation*}
In this case the embedding $E_{1}$ into $L^{1}(\mathbb R; K)$ is compact for $r\in (2,\infty)$. If $K\in L^1(\mathbb R)$ then the embedding is compact for $r\in [2,\infty)$.  \\
\end{rem}
\begin{rem}\label{convex-concave}
We remark that the methods and ideas applied {in the present section}  can be used to show the existence of two solutions (for small $\lambda$) for the following problem with convex-concave nonlinearity:
\begin{equation}
(-\Delta)^{\frac{1}{2}} u+ V(x) u= \lambda u^{q} + h(u) , u> 0 \;\textrm{in } \;\mathbb R,
\end{equation}
where $0<q<1$.

\noi The harmonic extension $w(x,y)$ of $u(x)$ satisfies the problem:
\begin{equation}
\begin{array}{rllll}
\Delta w = 0 \;\textrm{in } \;\mathbb \mathbb{R}^{2}_{+},\\
\frac{\partial w}{\partial y}= \la u^q + h(u) -V(x) u\; \text{in}\; \mathbb R.
\end{array}
\end{equation}
The variational functional $J_\la: E \rightarrow \mb R$ is defined as
\[J_\la(w) = \frac{1}{2}\|w\|^2 -\frac{\la}{q+1}\int_\mb R |{w(x,0)}|^{q+1}dx +\int_\mb R H(w(x,0))dx.\]
Then it is not difficult to show that $J_\la$ satisfies
\[J_\la (w) \ge  \left(\frac{1}{2}-\frac{\la_1 -\e}{\la_1}\right) \|w\|^2-\frac{\la}{q+1} \|w\|^{q+1}- C \|w\|^{p}\]
for some $p>2$. So by taking $\|w\|$ small {enough} we can show that there exists $\rho, R_0, \la_0$  such that for all $\la<\la_0$
\[J_\la (w) \ge \rho>0 \;\; \text{on}\; \|w\| = R_0.\]
Also, for a fixed $\phi $ with compact support in $\mb R^{2}_{+}$, for all small $t$
$J_\la (t\phi) <0.$
So, we can consider the minimization problem: \[ \min_{\|w\|\le R_0} J_\la(w).\]
This minimum is clearly negative and so one can follow  Lemma 3.5 and Theorem 3.1 to show the existence of a solution $w_\la$. Also, $w_\la$ is strict local minimum of $J_\la$ for $\la \in (0,\la_0)$.

\noindent To show the existence of second solution, we can translate the problem to the origin. {In other words, a second  solution} can be obtained as $v_\la = w_\la + v$ where $v$ satisfies
\begin{equation*}\label{Et}
\left\{\begin{array}{rl}
 -{\Delta} v&=0\;,\;v\ge 0\;\;\quad \quad \quad \quad\textrm{in}\; \mb{R}^{2}_{+} ,\\
  \frac{\partial v}{\partial \nu}&=\lambda g(v+w_\la)+h(v+w_\la)-\lambda g(w_\la)-h(w_\la)- {V}(v+w_\la)+V w_\la \;\; \textrm{on}\;\; \mb R
\end{array}
\right.
\end{equation*}
where $g(s)=s^{q}$.
The corresponding functional $\tilde{J}_{\la} :E \rightarrow \mb R$ is defined by
\[\tilde{J}_{\la}^{t}(v)=\frac{1}{2}\|v\|^2 - \la \int_\mb R \tilde{G}(v(x,0)) -\int_{\mb R} \tilde{H}(v(x,0))\]
where $\tilde{G}(v)=\int_{0}^{v} (g(s+w_\la)-g(w_\la)) ds$ and $\tilde{H}(v)=\int_{0}^{v} (h(s+w_\la)-h(w_\la)) ds$.
Since $w_\la$ is a strict local minimum and $\displaystyle \lim_{t\rightarrow{\infty}}\tilde {J}_\lambda(tv)=-\infty$ for fixed $v$,  we can fix $e\in E$ such that $\tilde {J}_\lambda(e)<0$. Let
\[
\Gamma=\{\gamma:[0, 1]\rightarrow H^1_{0, L}(\mathcal C): \gamma \;\textrm{continuous on}\; [0, 1], \gamma(0)=0, \gamma(1)=e\}.\]
Define the mountain pass level $\rho_0=\displaystyle\inf_{\gamma \in \Gamma}\displaystyle \sup_{t\in [0, 1]}\tilde {J}_\lambda(\gamma(t))$.
Now using the growth assumptions on $g$ and $h$, one can show as in Lemma 3.5 that there exists a Palais-Smale sequence below the mini-max level $\rho_0$ and the existence of { a positive solution} $v$ follows { similar arguments as} in the proof of theorem 3.1.
\end{rem}

\section{Kirchhoff equation with critical nonlinearity}
\setcounter{equation}{0}
\noindent In this section we study the problem
\[ (Q)\;\quad \; m\left(\int_{\mathbb R}|(-\Delta)^{\frac{1}{2}}u|^2 dx+\int_{\mb R} u^2 V dx\right)\left((-\Delta)^{\frac{1}{2}} u+ V(x) u\right)= f(u)\;\; \text{in}\; \mathbb R,\]
where $V\in C^{0,\gamma}_{\rm loc}(\mathbb R)$ with $0<\gamma<1$ verifies ${\bf (V)}$ and $m:\mathbb R^+ \rightarrow \mathbb R^+$ and $f:\mathbb R \rightarrow \mathbb R$ are continuous functions that satisfy the following assumptions:
\begin{enumerate}
\item[\bf (m1)] There exists $m_0>0$ such that $m(t)\ge m_0$ for all $t\ge 0$ and
\[M(t+s) \ge M(t)+M(s)\; \text{for all} \; s,t \ge 0,\]
where $M(t)=\int_0^t m(s) ds$, the primitive of $m$.
\item[\bf (m2)] There exists constants $a_1, a_2>0$ and $t_0>0$ such that for some $\sigma \in \mathbb R$
\[m(t)\le a_1 {+}a_2 t^\sigma,\; \text{for all}\; t \ge t_0.\]
\item[\bf (m3)] $\frac{m(t)}{t}$ is {decreasing} for $t>0.$
\end{enumerate}
A typical example of  a function satisfying ${\bf (m1)-(m3)}$ is $m(t)=m_0 +at$, where $m_0>0$  and $a\ge 0$. Also, from ${\bf (m3)}$ one can deduce that
\begin{equation}\label{4eq1}
\frac{1}{2}M(t)-\frac{1}{4}m(t) t \ge 0\;\; \text{for all} \; t\ge 0.
\end{equation}

\noindent The nonlinearity $f(t)$ satisfies
\begin{enumerate}
\item[\bf (f1)]$f\in C^1(\mathbb R), f(t)=0 $ for $t\le 0$, $f(t)>0$ for $t>0$  and $f(t)$ satisfies for any $\varepsilon>0$, $\displaystyle\lim_{t\to\infty} f(t)e^{-(1+\varepsilon) t^2}=0. $
 Moreover, $\displaystyle\lim_{t\to 0}\frac{f(t)}{t^3}=0$ and $\displaystyle \frac{f(t)}{t^\theta}$ is increasing in $(0,\infty)$ for some $\theta>3$.
\item[\bf (f2)] There exist positive constants $t_0, K_0$ such that
\[F(t)\le K_0 f(t), \;\; \text{for all}\; t\ge t_0, \; \; \text{where}\; F(t)=\int_{0}^{t} f(s) ds.\]
\item[\bf(f3)] $\displaystyle \lim_{t\rightarrow \infty} t {f}(t) e^{-t^2} =\infty.$
\end{enumerate}
\begin{rem}\label{examples}
A prototype examples of $f$ satisfying {\bf (f1)}-{\bf (f3)} are $t^pe^{t^2}$ with $p\geq \theta>3$ and $t^p(e^{t^\beta}-1)e^{t^2}$ with $p\geq \theta$ and $\beta\in (0, 2)$.
\end{rem}
Let $w(x,y)$ be the harmonic extension of $u(x)$. Then $w(x,y)$ satisfies the problem
\begin{equation}
\label{4eq2}
\begin{array}{rlll}
\Delta w &=0 \; \; \text{in}\; \mathbb R^{2}_{+}\\
\frac{\partial w}{\partial y}&= -V(x) u+ \frac{f(u)}{m(\|u\|_{X}^2)}\; \quad \text{on}\; \mathbb R.
\end{array}
\end{equation}
From the definition of $\mathcal{E}_{\frac{1}{2}}$, we have
\[\mathcal{E}_{\frac{1}{2}}(u)=w, ;\; \; \int_{\mathbb R}|(-\Delta)^{{\frac{1}{4}}}u|^2 dx= \int_{\mathbb R^{2}_{+}} |\nabla w(x,y)|^2 dx dy\; \text{and}\; w(x,0)=u(x).\]
Therefore, the problem in \eqref{4eq2} is equivalent to
\begin{equation}\label{4eq3}
\begin{array}{rlll}
-m\left(  \| w\|^2 \right)\Delta w &=0 \; \; \text{in}\; \mathbb R^{2}_{+}\\
-m\left(  \| w\|^2 \right)\left(\frac{\partial w}{\partial y}+V(x)w(x,0)\right)&= f(w(x,0))\; \quad \text{on}\; \mathbb R
\end{array}
\end{equation}
where $\Vert w\Vert:=\left(\int_{{\mathbb R}^2_+}\vert\nabla w\vert^2dx+\int_{\mathbb R}V(x)w(x,0)^2dx\right)^{\frac{1}{2}}$.
\begin{defi}
We say that $w\in E_V$ is a weak solution of \eqref{4eq3} if
\[ m(\| w\|^2 )\left(\int_{\mathbb R^{2}_{+}} \nabla w \nabla \phi dxdy +\int_{\mathbb R} V(x) w(x,0) \phi(x,0) dx\right) -\int_{\mathbb R} f(w(x,0))\phi(x,0) dx\]
holds for all $\phi \in E_V$.
\end{defi}
\noindent The variational functional corresponding to \eqref{4eq3} is defined as $J: E_V \rightarrow \mathbb R$ as
\begin{equation}\label{4eq4}
J(w)=\frac{1}{2} M\left(\| w\|^2\right) -\int_{\mathbb R} F(w(x,0))dx.
\end{equation}
Then the {the trace of }critical points of the functional $J$ are weak solutions {to} $(Q)$. We prove the following theorem in this section.
\begin{thm}\label{thm4.1}
Suppose {\bf (m1)}-{\bf (m3)} and {\bf (f1)}-{\bf (f3)} are satisfied. Then the problem $(Q)$ has a positive weak solution.
\end{thm}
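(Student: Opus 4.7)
The plan is to apply the mountain pass theorem to $J$ on $E_V$, adapting the scheme of Section 3 to accommodate the non-local Kirchhoff coefficient $m$. In contrast with Section 3 where the first critical level was $\pi/2$, here the natural threshold is $\tfrac{1}{2} M(\pi)$, since $J$ weights the Dirichlet energy through $M$ while the Trudinger--Moser estimate of Theorem \ref{thmnew} still requires $\|w\|^2 < \pi$. First I would verify the mountain pass geometry of $J$: by $(\textbf{m1})$, $\tfrac{1}{2} M(\|w\|^2) \ge \tfrac{m_0}{2}\|w\|^2$; combining this with the bound $F(t) \le \epsilon t^2 + C_\epsilon t^4 (e^{(1+\epsilon) t^2}-1)$ that follows from $(\textbf{f1})$ (recall $f(t)/t^3\to 0$ at zero) and the Trudinger--Moser inequality \eqref{newcor}, I obtain $J(w) \ge \alpha > 0$ on a small sphere $\|w\| = \rho$. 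For the descending ray, the monotonicity of $f(t)/t^\theta$ with $\theta>3$ gives $F(t) \ge C t^{\theta+1} - C'$, while $(\textbf{m2})$ bounds $M$ polynomially; hence $J(tw) \to -\infty$ for any fixed compactly supported $w\in E_V$ with $w(x,0)\not\equiv 0$.

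Second, I would bound the mountain pass level $c := \inf_{\gamma\in\Gamma}\max_{t\in [0,1]} J(\gamma(t))$ from above by $\tfrac{1}{2} M(\pi)$ using the concentrating Moser functions $\{\phi_k\}$ of Lemma \ref{mser}. Following the contradiction argument of Lemma 3.5, if $c \ge \tfrac{1}{2}M(\pi)$ and $t_k>0$ maximises $t \mapsto J(t\phi_k)$, then monotonicity of $M$ forces $t_k^2 \ge \pi$. The critical point condition $m(t_k^2) t_k^2 = \int_{\mathbb R} f(t_k \phi_k) t_k \phi_k\,dx$ combined with the pointwise lower bound $\phi_k^2(0) = \tfrac{1}{\pi}\log k + O(1)$, the volume factor $2/k$ of $B(0,1/k)$, and assumption $(\textbf{f3})$ forces $m(t_k^2) t_k^2 \to \infty$, contradicting boundedness of $\{t_k\}$ inherited from $(\textbf{m1})$ and the mountain pass hypothesis. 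Boundedness of a Palais--Smale sequence $\{w_k\}$ at level $c$ then follows by combining the Ambrosetti--Rabinowitz inequality $f(t)t \ge \theta F(t)$ (consequence of the monotonicity of $f(t)/t^\theta$) with the algebraic identity \eqref{4eq1} justified by $(\textbf{m3})$, exactly as in Lemma \ref{psbo}.

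Third, for a bounded Palais--Smale sequence $w_k \rightharpoonup w_0$ at level $c < \tfrac{1}{2}M(\pi)$, I would establish $\limsup_k \|w_k\|^2 < \pi$. This is the main obstacle and is precisely where $(\textbf{m3})$ is crucial: combining $\langle J'(w_k), w_k\rangle = o(\|w_k\|)$, $J(w_k) = c+o(1)$, and the inequality \eqref{4eq1} yields
\[
\tfrac{1}{2} M(\|w_k\|^2) - \tfrac{1}{\theta} m(\|w_k\|^2) \|w_k\|^2 \le c + o(1),
\]
which together with monotonicity of $M$ forces $M(\|w_k\|^2) < M(\pi)$ up to a subsequence, hence $\|w_k\|^2 < \pi$. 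Once this is secured, Theorem \ref{thmnew} combined with a Lions-type higher-integrability argument gives $f(w_k)$ bounded in $L^{1+\delta}_{\mathrm{loc}}(\mathbb R)$ for some $\delta>0$, so Vitali's theorem as in Lemma \ref{lem2.4} produces $\int f(w_k)\varphi \to \int f(w_0)\varphi$ for every $\varphi \in E_V$ and $\int F(w_k) \to \int F(w_0)$. Continuity of $m$ yields $m(\|w_k\|^2) \to m(\rho^2)$; passing to the limit in $\langle J'(w_k), w_k - w_0\rangle = o(1)$ gives $\|w_k\| \to \|w_0\|$, hence strong convergence in $E_V$. Non-triviality $w_0 \not\equiv 0$ is obtained exactly as at the end of the proof of Theorem \ref{mht0}: if $w_0 \equiv 0$ then $\int F(w_k) \to 0$, so $\tfrac{1}{2}M(\|w_k\|^2) \to c>0$, which combined with $\limsup \|w_k\|^2 < \pi$ and the $L^{1+\delta}$ bootstrap via \eqref{newcor} forces $\|w_k\| \to 0$, a contradiction. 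Finally, positivity and $C^2$-regularity of $u(x) = w_0(x,0)$ follow from the cylindrical odd extension and the local elliptic regularity of \cite{XcT}, as in the final step of the proof of Theorem \ref{mht0}.
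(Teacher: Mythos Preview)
Your outline is correct through the mountain pass geometry, the level estimate $c_*<\tfrac{1}{2}M(\pi)$, and the boundedness of Palais--Smale sequences. The gap is in the third step, where you claim that
\[
\tfrac{1}{2}M(\|w_k\|^2)-\tfrac{1}{\theta}m(\|w_k\|^2)\|w_k\|^2\le c+o(1)<\tfrac{1}{2}M(\pi)
\]
forces $M(\|w_k\|^2)<M(\pi)$. This inference is unjustified: the left-hand side is in general much smaller than $\tfrac{1}{2}M(\|w_k\|^2)$ (for instance with $m(t)=1+t$ and $\theta=5$, the inequality above allows $\|w_k\|^2$ to exceed $2\pi$). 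For Kirchhoff problems one cannot, in general, recover $\|w_k\|^2<\pi$ from the Palais--Smale information and the level bound alone; this is precisely the new difficulty compared to Section~3.

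The paper circumvents this by never proving $\|w_k\|^2<\pi$. Instead it first shows $w_0\not\equiv 0$ (where the bound $\|w_k\|^2<\pi$ \emph{does} follow, since then $\tfrac{1}{2}M(\|w_k\|^2)\to c_*<\tfrac{1}{2}M(\pi)$), and then, for the case $w_0\ne 0$, uses a Lions-type lemma (Lemma~\ref{lions}) giving uniform bounds on $\int(e^{p\alpha w_k^2}-1)$ for any $p<(1-\|w_0\|^2/\rho_0^2)^{-1}$. The key is to secure $\rho_0^2(1-\|v_0\|^2)=\rho_0^2-\|w_0\|^2<\pi$, which is obtained from the Nehari comparison $c_*\le b$, the sign condition $J(w_0)\ge 0$ (via $sf(s)-4F(s)\ge 0$ and \eqref{4eq1}), and the superadditivity in $(\textbf{m1})$: one gets $M(\rho_0^2)\le 2c_*+M(\|w_0\|^2)<M(\pi)+M(\|w_0\|^2)\le M(\pi+\|w_0\|^2)$. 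This replaces your unattainable global bound $\|w_k\|^2<\pi$ by a relative bound that is exactly what the Lions lemma needs.
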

\noindent We prove this theorem by using the mountain pass lemma. In the next few lemmas we study the mountain pass structure and properties of Palais-Smale sequences to the functional $J$. Our proofs closely follow \cite{Gf}.

\begin{lem}\label{lem4.1}
Assume the conditions ${\bf (m1)}$, ${\bf (f1)}-{\bf (f3)}$ hold. Then $J$ satisfies mountain-pass geometry {around $0$}.
\end{lem}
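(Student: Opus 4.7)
My plan is to verify the two conditions of the mountain-pass geometry separately: (i) $J(w)\ge \alpha>0$ for all $w$ on some sphere $\|w\|=\rho$, and (ii) there exists $e\in E_V$ with $\|e\|>\rho$ and $J(e)<0$. Throughout I will use that $\frac{1}{2}M(\|w\|^2)\ge \frac{m_0}{2}\|w\|^2$ from $\textbf{(m1)}$, which plays the role of the quadratic principal part.

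For part (i), I would first derive from $\textbf{(f1)}$ a pointwise bound on the primitive: since $\lim_{t\to 0}f(t)/t^3=0$ and $\lim_{t\to\infty}f(t)e^{-(1+\varepsilon)t^2}=0$, for any $\varepsilon>0$ and any $r>4$ there exists $C=C(\varepsilon,r)>0$ such that
\begin{equation*}
F(t)\le \varepsilon |t|^{4} + C|t|^{r}\bigl(e^{(1+\varepsilon)t^{2}}-1\bigr)\quad \text{for all } t\in \mathbb{R}.
\end{equation*}
Plugging this into $J(w)$, splitting by H\"older with conjugate exponents $s,s'$, and using the continuous embedding of $E_V$ into $L^{4}(\mathbb{R})$ and $L^{rs'}(\mathbb{R})$ together with Corollary \ref{newcor} (choosing $\rho$ small enough so that $(1+\varepsilon)s\|w\|^{2}<\pi$), I obtain
\begin{equation*}
J(w)\ge \tfrac{m_0}{2}\|w\|^{2}-\varepsilon C_{1}\|w\|^{4}- C_{2}\|w\|^{r},
\end{equation*}
which is strictly positive on a sphere $\|w\|=\rho$ for $\rho>0$ sufficiently small. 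This is essentially a rerun of the argument in Lemma 3.2 with $u^4$ replacing the $u^2$ term.

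For part (ii), I would exploit the superlinear growth encoded in the monotonicity hypothesis $\textbf{(f1)}$: since $f(t)/t^{\theta}$ is increasing for some $\theta>3$, one has $f(t)\ge f(1)t^{\theta}$ for $t\ge 1$ and consequently $F(t)\ge C_{1}t^{\theta+1}-C_{2}$ for all $t\ge 0$. From $\textbf{(m2)}$ I estimate $M(t)\le C(1+t^{\sigma+1})$, and crucially $\textbf{(m3)}$ forces $m(t)/t$ to be decreasing so $m(t)$ grows at most linearly, i.e. effectively $\sigma\le 1$. Picking a nonnegative $\varphi\in C_{c}^{\infty}(\mathbb{R}^{2}_{+})$ with $\varphi(\cdot,0)\not\equiv 0$ and evaluating along the ray $tw=t\varphi$ gives
\begin{equation*}
J(t\varphi)\le \tfrac{C}{2}\bigl(1+t^{2(\sigma+1)}\|\varphi\|^{2(\sigma+1)}\bigr)-C_{1}t^{\theta+1}\int_{\mathbb{R}}|\varphi(x,0)|^{\theta+1}dx + C_{2}|\mathrm{supp}\,\varphi(\cdot,0)|.
\end{equation*}
Since $2(\sigma+1)\le 4<\theta+1$, the right-hand side tends to $-\infty$ as $t\to\infty$, so we may take $e=t_{\ast}\varphi$ for sufficiently large $t_{\ast}$.

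The main delicate point will be the first part: ensuring the Moser-Trudinger step is applicable requires that $\rho$ be chosen so small that the composite exponent $(1+\varepsilon)s\|w\|^{2}$ stays below the critical threshold $\pi$ of Corollary \ref{newcor}, while simultaneously absorbing the $\varepsilon|t|^{4}$ remainder by the $m_{0}$-quadratic part. The second part is routine once one observes that $\textbf{(m3)}$ compensates the a priori unbounded exponent $\sigma$ in $\textbf{(m2)}$ so that the nonlinear term ultimately dominates.
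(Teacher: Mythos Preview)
Your proof is correct and for part (i) proceeds exactly as the paper does (your use of $\varepsilon|t|^{4}$ in place of the paper's $\varepsilon|t|^{2}$ is a harmless variant, in fact slightly cleaner since the $\varepsilon$-term then vanishes as $\rho\to 0$ without needing $\varepsilon$ small).

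For part (ii) you take a genuinely different route. The paper does \emph{not} use ${\bf (m3)}$ here; instead it exploits ${\bf (f3)}$, which forces $f$ to grow faster than any polynomial, so that $F(t)\ge C_{1}t^{\theta}-C_{2}$ holds for an arbitrarily large exponent $\theta$. The paper then simply picks $\theta>\max\{2,2(\sigma+1)\}$ to beat the upper bound on $M$ coming from ${\bf (m2)}$, whatever $\sigma$ may be. Your argument instead observes that ${\bf (m3)}$ alone already gives $m(t)\le (m(t_{0})/t_{0})\,t$ for $t\ge t_{0}$, hence $M(t)\le C(1+t^{2})$, so the growth exponent of $M$ is effectively capped at $2(\sigma+1)=4$; then the monotonicity of $f(t)/t^{\theta}$ from ${\bf (f1)}$ with $\theta>3$ suffices to produce $F(t)\ge C_{1}t^{\theta+1}-C_{2}$ with $\theta+1>4$. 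Both arguments are valid; yours has the advantage of not invoking ${\bf (f3)}$ at all in this lemma, while the paper's has the advantage of not needing ${\bf (m3)}$.

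One small slip: writing ``$\varphi\in C_{c}^{\infty}(\mathbb{R}^{2}_{+})$ with $\varphi(\cdot,0)\not\equiv 0$'' is self-contradictory if $C_{c}^{\infty}(\mathbb{R}^{2}_{+})$ means compact support in the \emph{open} half-plane, since then the trace vanishes identically. You should take $\varphi\in E_{V}$ with compact support in $\overline{\mathbb{R}^{2}_{+}}$ (as the paper does), or equivalently $\varphi\in C_{c}^{\infty}(\overline{\mathbb{R}^{2}_{+}})$.
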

\begin{proof} From the assumptions, ${\bf (f1)}-{\bf (f3)}$, for $\e>0$, $r>2$, there exists $C>0$ such that
\[|F(t)| \le \e |t|^2 + C |t|^r (e^{(1+\epsilon)|t|^{2}}-1),\;\; \text{for all}\; t\in  \mb R.\]
Therefore, using Sobolev and H\"{o}lder inequalities, for $w\in H^{1}(\mb R^{2}_{+})$, we get
\begin{align*}
\int_{\mb R} F(w(x,0))~ dx &\le \e\int_{\mb R} |w(x,0)|^2  +C \int_{\mb R} |w(x,0)|^r (e^{(1+\epsilon)|w(x,0)|^{2}}-1) dx \\
&\le \e C_1 \|w\|^2 + C \|w\|_{{L^{2r}(\mathbb R)}}^{r} \left(\int_{\mb R} (e^{2(1+\epsilon)\|w(x,0)\|^{2} (\frac{w}{\|w\|})^{2}}-1) \right)^{1/2}\\
&\le \e C_1 \|w\|^2 + C_2 \|w\|_{{L^{2r}(\mathbb R)}}^r
\end{align*}
\noi for $\|w\|<R_1$, where $(1+\epsilon)^{1/2}R_1\leq \left(\frac{\pi}{2}\right)^{\frac{1}{2}}$, thanks to Moser-Trudinger inequality in \eqref{newcor}.
Hence
\[J(w) \ge \|w\|^2 \left(\frac{m_0}{2}-\e C_1 - C_2 \|w\|^{r-2}\right).\]
Since $r>2,$ we can choose $\e$, $0<R\leq R_1$ small such that $J(w)\ge \tau$ for some $\tau$ on $\|w\|=R$.\\
\noi Now by {${\bf (f1)}$ and ${\bf (f3)}$}, for $\theta>\max\{2,2(\sigma+1)\}$, there exist $C_1$, $C_2>0$ such that
 \begin{equation}\label{n3}
 F(t)\geq C_1 t^{\theta}- C_2\;\mbox{for all}\; t \ge 0
 \end{equation}
 and  condition $(m2)$ implies that {for all $t\geq t_0$}
\begin{equation}\label{n2}
M(t)\leq\left\{
 \begin{array}{lr}
 a_0+a_1t+\frac{a_2}{\sigma+1} t^{\sigma+1},\; \mbox{if}\; \sigma\ne -1,\\
 b_0+ a_1 t+a_2 \ln t\quad\quad\mbox{if}\; \sigma=-1,
 \end{array}
 \right.
\end{equation}
where $a_0= M(t_0) -a_1t_0- \frac{a_2}{\sigma+1} t_0^{\sigma+1}$ and $b_0= M(t_0) -a_1 t_0-a_2\ln t_0$. Now, choose a function $\phi_0 \in E_V$ with compact support, $\phi_0\geq 0$ and $\| \phi_0\|=1$. Then from \eqref{n3} and \eqref{n2}, for all $t\geq t_0$, we obtain
\begin{equation*}
J(t\phi_0)\leq \left\{
\begin{array}{lr}
\frac{a_0}{2}+\frac{a_1}{2}t^2+\frac{a_2}{2\sigma+2} t^{2\sigma+2}  - C_1 t^{\theta}\|\phi_0\|^{\theta}_{\theta}+C_2 ,\; \mbox{if}\; \sigma\ne -1,\\
\frac{ b_0}{2}+ \frac{a_1}{2} t^2 +\frac{a_2}{2} \ln t - C_1 t^{\theta}\|\phi_0\|^{\theta}_{\theta}+C_2 \;\;\;\quad\quad\mbox{if}\; \sigma=-1,
\end{array}
\right.
\end{equation*}
from which we conclude that $J(t \phi_0)\ra -\infty$ as $t\ra +\infty$ provided that $\frac{\theta}{2}>\max\{1, \sigma+1\}$.
Therefore, $J$ satisfies mountain-pass geometry near $0$.
\end{proof}

\begin{lem}\label{lem4.2}
Every Palais-Smale sequence of $J$ is bounded in $E_V$.
\end{lem}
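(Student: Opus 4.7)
The plan is to run the standard Ambrosetti–Rabinowitz–type boundedness argument, suitably adapted to the Kirchhoff setting so that the Kirchhoff prefactor does not spoil the estimate. Let $\{w_k\}\subset E_V$ be a $(PS)_c$ sequence, so
\[
J(w_k)=\tfrac{1}{2}M(\|w_k\|^2)-\int_{\mathbb R}F(w_k(x,0))\,dx=c+o(1),
\]
\[
\langle J'(w_k),w_k\rangle=m(\|w_k\|^2)\|w_k\|^2-\int_{\mathbb R}f(w_k(x,0))w_k(x,0)\,dx=o(\|w_k\|).
\]
The strategy is to form the combination $J(w_k)-\tfrac{1}{\theta+1}\langle J'(w_k),w_k\rangle$ for the $\theta>3$ appearing in assumption \textbf{(f1)}, show the nonlinear integrand becomes nonnegative, and show the Kirchhoff part bounds $\|w_k\|^2$ from below.

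The first step is to extract an Ambrosetti–Rabinowitz–type inequality from \textbf{(f1)}. Since $f(t)/t^\theta$ is nondecreasing on $(0,\infty)$, for $0<s\le t$ one has $f(s)\le f(t)(s/t)^\theta$, and integrating gives
\[
F(t)=\int_0^t f(s)\,ds\le \frac{f(t)\,t}{\theta+1},\qquad t>0,
\]
which is trivially valid for $t\le 0$ since $F\equiv 0$ there. Hence the integrand
\[
F(w_k(x,0))-\tfrac{1}{\theta+1}f(w_k(x,0))w_k(x,0)\le 0\quad\text{a.e. in }\mathbb R.
\]

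The second step is to use \eqref{4eq1}, which followed from \textbf{(m3)} and gives $\tfrac{1}{2}M(t)\ge \tfrac{1}{4}m(t)t$ for all $t\ge 0$. Combining with $m(t)\ge m_0$ from \textbf{(m1)},
\[
\tfrac{1}{2}M(\|w_k\|^2)-\tfrac{1}{\theta+1}m(\|w_k\|^2)\|w_k\|^2\ge\left(\tfrac{1}{4}-\tfrac{1}{\theta+1}\right)m(\|w_k\|^2)\|w_k\|^2\ge \frac{m_0(\theta-3)}{4(\theta+1)}\|w_k\|^2,
\]
where the constant is \emph{strictly positive} precisely because $\theta>3$. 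Putting the two steps together yields
\[
\frac{m_0(\theta-3)}{4(\theta+1)}\|w_k\|^2\le J(w_k)-\tfrac{1}{\theta+1}\langle J'(w_k),w_k\rangle = c+o(1)+o(\|w_k\|),
\]
from which boundedness of $\{\|w_k\|\}$ follows.

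The only delicate point—and the step I would double-check carefully—is ensuring that the constant $\tfrac{1}{4}-\tfrac{1}{\theta+1}$ is positive without invoking any stronger form of the AR condition; this is exactly where the numerical threshold $\theta>3$ in \textbf{(f1)} (as opposed to the usual $\theta>2$ one would use in the non-Kirchhoff case) is needed, because the Kirchhoff identity \eqref{4eq1} only allows us to replace $\tfrac{1}{2}M$ by $\tfrac{1}{4}m(t)t$, one factor of $2$ worse than the non-Kirchhoff case. Everything else is routine manipulation of the Palais–Smale identities.
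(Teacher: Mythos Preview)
Your proof is correct and follows essentially the same approach as the paper: form a linear combination of $J(w_k)$ and $\langle J'(w_k),w_k\rangle$, use the Kirchhoff identity \eqref{4eq1} from \textbf{(m3)} together with $m\ge m_0$, and use the monotonicity of $f(t)/t^\theta$ from \textbf{(f1)} to make the nonlinear term nonnegative. Your choice of coefficient $\tfrac{1}{\theta+1}$ (rather than the paper's $\tfrac{1}{\theta}$) is in fact the sharper one, since the AR-type inequality you derive is $(\theta+1)F(t)\le tf(t)$; this makes transparent why $\theta>3$ (and not $\theta>4$, as the paper ends up writing) is the correct threshold.
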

\begin{proof}
Let $\{w_k\}\subset E_V$ be a Palais-Smale sequence for $J$ at level $c$, that is
\begin{equation}\label{n7a1}
{\frac{1}{2}M(\|w_k\|^2)-\int_{\mb R} F(w_k(x,0)) \rightarrow c}
\end{equation}
and for all $\phi \in E_V$
\begin{equation}\label{n7a2}
 \left|-m(\|w_k\|^2)\left(\int_{\mb R^{2}_{+}}  \na w_k \na \phi ~dx dy+\int_{\mb R} {w_k(x,0) \phi V(x)} dx\right) -\int_{\mb R} f(w_k(x,0)) \phi(x,0)~ dx\right| \le \e_k \|\phi\|
\end{equation}
where $\e_k\rightarrow 0$ as $k\rightarrow \infty.$
From {\bf (m3)}, {{\bf (f1)}},  \eqref{n7a1} and \eqref{n7a2}, we obtain { that there exists $C>0$ independent of $k$ such that}
 \begin{align*}
C+ {\e_k}\|w_k\|&\ge \frac{1}{2}M(\|w_k\|^2)-\frac{1}{\theta} m(\|w_k\|^2) \|w_k\|^2 \\
&\quad\quad-\int_{\mb R} \left(F(w_k(x,0))-\frac{1}{\theta}f(w_k(x,0))w_k(x,0)\right)~dx\\
&\geq \left(\frac{1}{4}-\frac{1}{\theta}\right) m(\|w_k\|^2) \|w_k\|^{2}.
\end{align*}
From this and taking $\theta >4$ as in {\bf (f1)}, we obtain the boundedness of the sequence.  \end{proof}

\noi Let $\ds \Gamma=\left\{\gamma\in C\left([0,1],H^{1}(\mb R^{2}_{+})\right):\gamma(0)=0,\; J(\gamma(1))<0\right\}$ and define the mountain-pass level
\begin{equation}\label{eq4.9}
 c_*=\inf_{\gamma\in \Gamma}\max_{t\in[0,1]}J(\gamma(t)).
 \end{equation}
Then we have,

\begin{lem}\label{lem4.3}
Let $c_*$ be defined as in \eqref{eq4.9}. Then $\ds c_*<\frac{1}{2}M(\pi)$.
\end{lem}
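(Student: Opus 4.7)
The strategy is to mimic the contradiction argument of Lemma 3.6, adapted to the Kirchhoff nonlocal setting, using the Moser sequence $\{\phi_k\}\subset E_V$ provided by Lemma \ref{mser}. I would argue by contradiction: assume $c_*\ge \frac{1}{2}M(\pi)$. Since by Lemma \ref{lem4.1} one has $J(t\phi_k)\to-\infty$ as $t\to+\infty$, for $T_k$ sufficiently large the path $t\mapsto tT_k\phi_k$ lies in $\Gamma$, so that
\[
\sup_{t\ge 0}J(t\phi_k)\;\ge\;c_*\;\ge\;\frac{1}{2}M(\pi).
\]
Let $t_k>0$ be a point where this supremum is attained. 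Since $\|\phi_k\|=1$, the two basic relations are
\begin{equation*}
m(t_k^2)\,t_k^2\;=\;\int_{\mathbb R} f(t_k\phi_k(x,0))\,t_k\phi_k(x,0)\,dx,\qquad \frac{1}{2}M(t_k^2)-\int_{\mathbb R}F(t_k\phi_k(x,0))\,dx\;\ge\;\frac{1}{2}M(\pi).
\end{equation*}
Using $F\ge 0$ and the strict monotonicity of $M$ (consequence of \textbf{(m1)}), the second inequality gives $t_k^2\ge \pi$.

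Next, I would localize on the ball $B(0,1/k)\cap \mathbb R^2_+$, where $\phi_k$ is constant with $\phi_k^2(0)=\frac{1}{\pi}\log k+O(1)$. Since $t_k\phi_k(0)\to+\infty$, by \textbf{(f3)} there exists a sequence $A_k\to+\infty$ with
\[
f(t_k\phi_k(0))\,t_k\phi_k(0)\;\ge\;A_k\,e^{t_k^2\phi_k^2(0)}.
\]
Restricting the integral above to $B(0,1/k)$ and using $\phi_k^2(0)\ge\frac{1}{\pi}\log k-C_0$, I obtain
\[
m(t_k^2)\,t_k^2\;\ge\;\frac{2}{k}f(t_k\phi_k(0))t_k\phi_k(0)\;\ge\;C\,A_k\,k^{\,t_k^2/\pi\,-\,1},
\]
and since $t_k^2\ge\pi$ the factor $k^{t_k^2/\pi-1}\ge 1$, hence $m(t_k^2)t_k^2\ge C A_k\to+\infty$.

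It remains to derive a contradiction from this last inequality. If $\{t_k\}$ is bounded, then by continuity $m(t_k^2)t_k^2$ is bounded and we are done. If, passing to a subsequence, $t_k\to+\infty$, then by hypothesis \textbf{(m2)} we have $m(t_k^2)t_k^2\le C(t_k^2+t_k^{2\sigma+2})$, which grows at most polynomially in $t_k$. Taking logarithms in the lower bound yields
\[
2(\sigma+1)\log t_k+O(1)\;\ge\;\log A_k+\Big(\tfrac{t_k^2}{\pi}-1\Big)\log k,
\]
and since $t_k\to+\infty$ forces $t_k^2/\pi-1$ to be eventually bounded away from $0$, the right hand side dominates, giving a contradiction.

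\textbf{Main obstacle.} The delicate point is the unbounded case for $t_k$: the growth assumption \textbf{(m2)} is precisely what keeps $m(t_k^2)t_k^2$ sub-exponential and hence unable to compete with the $k^{t_k^2/\pi -1}$ blow-up. The argument also depends crucially on the asymptotic assumption \textbf{(f3)} (the Kirchhoff analogue of (h4) in Section 3), which guarantees that $A_k\to\infty$ whenever $t_k\phi_k(0)\to\infty$, thereby producing the required strict drop of the mountain-pass level below the threshold $\frac{1}{2}M(\pi)$.
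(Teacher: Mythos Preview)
Your overall strategy coincides with the paper's: argue by contradiction, use the Moser sequence $\{\phi_k\}$ of Lemma~\ref{mser}, derive $t_k^2\ge\pi$ from $F\ge 0$ and the monotonicity of $M$, and then exploit the critical point relation $m(t_k^2)t_k^2=\int_{\mathbb R}f(t_k\phi_k)t_k\phi_k\,dx$ localized on $[-1/k,1/k]$ together with {\bf (f3)}. The one structural difference is that the paper first observes (from $J(t_k\phi_k)\ge\tfrac12 M(\pi)$ and $J(t\phi_k)\to-\infty$) that $\{t_k\}$ is bounded; once this is in hand, $m(t_k^2)t_k^2$ is bounded, which forces $t_k^2\to\pi$ and then {\bf (f3)} yields the contradiction directly, without any case split or appeal to {\bf (m2)}.

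Your treatment of the unbounded case, however, contains a computational slip. From $\phi_k^2(0)\ge\tfrac1\pi\log k-C_0$ one only gets
\[
\frac{2}{k}\,A_k\,e^{t_k^2\phi_k^2(0)}\ \ge\ 2A_k\,k^{t_k^2/\pi-1}\,e^{-C_0 t_k^2},
\]
and the factor $e^{-C_0 t_k^2}$ cannot be absorbed into a fixed constant $C$ when $t_k\to\infty$; your displayed lower bound $m(t_k^2)t_k^2\ge C A_k k^{t_k^2/\pi-1}$ and the subsequent logarithmic inequality are therefore missing a $C_0 t_k^2$ term. The fix is easy: along the subsequence with $t_k\to\infty$ one has $t_k^2>2\pi$ eventually, and once $k$ is so large that $\log k>2\pi C_0$ one obtains $(\tfrac{t_k^2}{\pi}-1)\log k\ge 2C_0(t_k^2-\pi)>C_0 t_k^2$, so the exponential lower bound still dominates the polynomial upper bound coming from {\bf (m2)} and the contradiction persists. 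With this correction your argument is complete and, in handling the possible unboundedness of $t_k$ explicitly, arguably more self-contained than the paper's rather terse justification of that point.
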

\begin{proof} Let  {$\phi_k$} be the sequence of Moser functions as in Lemma \ref{mser}. Assume by contradiction that $c_*\geq\frac{1}{2}M(\pi)$. Then for each $k$, there exists $t_k$ such that
\begin{equation}\label{7a1}
\sup_{t>0}J(t\phi_k)=J(t_k\phi_k)=\frac{1}{2}M(\|t_k\phi_k\|^2)-\int_{\mb R} F(t_k\phi_k(x,0)) \geq  \frac{1}{2}M(\pi).
\end{equation}
From \eqref{7a1}, we see that $t_k$ is a bounded sequence as $J(t_k\phi_k)\rightarrow-\infty$ as $t_k\rightarrow\infty.$
Also using the fact that $M$ is monotone increasing and $F(t_k \phi_k(x,0))\geq 0$ in \eqref{7a1}, we obtain
\begin{equation}\label{7a2}
t_k^2\geq\pi.
\end{equation}
Now since $t_k$ is a point of maximum for one dimensional map $t\mapsto J(t\phi_k),$ we have
$\frac{d}{dt}J(t\phi_k)|_{t=t_k}=0.$ From this it follows that
\begin{align}\label{new7a2}
 {m(t_k^2)t_k^2=}m(t_k^2\|\phi_k\|^2)t_k^2\|\phi_k\|^2 &=\int_{\mb R} f(t_k\phi_k(x,0))t_k\phi_k(x,0)\geq \int_{-{\frac{1}{k}}}^{{\frac{1}{k}}} f(t_k\phi_k(x,0))t_k\phi_k(x,0)\notag\\
 &= \frac{2}{k} t_k \phi_k(0) f(t_k\phi_k(0)).
\end{align}
{Then from the above inequality and \eqref{7a2}, it follows that $t_{k}^{2} \rightarrow \pi$}. Now as in Lemma \ref{mser},  {$(f3)$} and
\eqref{new7a2} gives the required  contradiction. Hence
$c_*<\frac{1}{2}M(\pi)$.\end{proof}

\noi Now we have the following version of Lion's Lemma. The proof here is an {adaptation} of Lemma 2.3 of \cite{yang}.
\begin{lem}\label{lions}
Let $\{w_k\}$ be a sequence in $E_V$ with $\|w_k\|=1$ and $w_k \rightharpoonup w_0$ weakly in $E_V$. Then for any $p$ such that $1<p<\frac{1}{1-\|w_0\|^2}$, we have
\[\sup_k \int_{\mb R} \left(e^{\al p (w_{k}(x,0))^2}-1\right) dx <\infty, \quad \text{for all}\; \; 0<\al<\pi.\]
\end{lem}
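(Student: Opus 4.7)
The plan is to adapt the classical Lions concentration-compactness argument to our setting, exploiting weak convergence to extract a gap that allows Moser--Trudinger bounds (Theorem \ref{thmnew} and Corollary 2.1) to be applied to the ``remainder'' $w_k-w_0$, while treating $w_0$ as a harmless fixed function via the Orlicz-type bound of Lemma \ref{orlicz-property}.

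First, I would use weak convergence $w_k\rightharpoonup w_0$ in $E_V$, together with $\|w_k\|=1$, to compute
\[\|w_k-w_0\|^2=\|w_k\|^2-2\langle w_k,w_0\rangle+\|w_0\|^2\longrightarrow 1-\|w_0\|^2\]
as $k\to\infty$. Fix $\epsilon>0$ small and apply the elementary inequality $(a+b)^2\le(1+\epsilon)a^2+(1+1/\epsilon)b^2$ to the pointwise splitting $w_k(x,0)=w_0(x,0)+(w_k-w_0)(x,0)$. Setting
\[X_k(x)=\alpha p(1+\epsilon)\bigl((w_k-w_0)(x,0)\bigr)^2,\qquad Y(x)=\alpha p(1+1/\epsilon)\bigl(w_0(x,0)\bigr)^2,\]
this yields $e^{\alpha p (w_k(x,0))^2}\le e^{X_k(x)}e^{Y(x)}$, hence the key pointwise identity
\[e^{\alpha p (w_k(x,0))^2}-1\le (e^{X_k}-1)(e^Y-1)+(e^{X_k}-1)+(e^Y-1).\]

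Next, I would handle each term by H\"older's inequality combined with the elementary bound $(e^t-1)^q\le e^{qt}-1$, valid for $t\ge 0$ and $q\ge 1$. Pick conjugate exponents $q,q'>1$; then
\[\int_{\mathbb R}(e^{X_k}-1)(e^Y-1)\,dx\le\left(\int_{\mathbb R}(e^{qX_k}-1)\,dx\right)^{1/q}\left(\int_{\mathbb R}(e^{q'Y}-1)\,dx\right)^{1/q'}.\]
The second factor is a finite constant depending only on $w_0$ by Lemma \ref{orlicz-property}. For the first factor, write $\tilde w_k:=(w_k-w_0)/\|w_k-w_0\|\in E_V$ so that $\|\tilde w_k\|=1$ and $qX_k=q\alpha p(1+\epsilon)\|w_k-w_0\|^2\,\tilde w_k^2$. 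Applying Corollary 2.1 (equivalently Theorem \ref{thmnew}) gives a uniform bound $\sup_k\int(e^{qX_k}-1)\,dx<\infty$ provided that, for $k$ large,
\[q\alpha p(1+\epsilon)\|w_k-w_0\|^2<\pi.\]
An identical argument controls $\int(e^{X_k}-1)\,dx$ with $q=1$ (here only the condition $\alpha p(1+\epsilon)(1-\|w_0\|^2)<\pi$ is needed, which is weaker).

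The heart of the proof is the choice of parameters. Since by hypothesis $p(1-\|w_0\|^2)<1$, write $p(1-\|w_0\|^2)=1-\delta$ with $\delta>0$; since $\alpha<\pi$, the ratio $\pi/\alpha>1$. Thus I can choose $q>1$ so close to $1$ and $\epsilon>0$ so small that $q(1+\epsilon)(1-\delta)<\pi/\alpha$, which ensures the required inequality holds in the limit and therefore for all sufficiently large $k$; truncating finitely many terms if necessary, uniformity is preserved. Putting all three contributions together yields $\sup_k\int_{\mathbb R}(e^{\alpha p(w_k(x,0))^2}-1)\,dx<\infty$, as claimed. The main technical obstacle is precisely this calibration of $(q,\epsilon)$: the strict inequality $p<1/(1-\|w_0\|^2)$ is exactly the slack needed to push $q\alpha p(1+\epsilon)(1-\|w_0\|^2)$ below the critical threshold $\pi$ of the Trudinger--Moser inequality on $\mathbb R^{2}_{+}$.
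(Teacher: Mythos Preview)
Your proof is correct and follows essentially the same strategy as the paper's: split $w_k^2\le(1+\epsilon)(w_k-w_0)^2+C(\epsilon)w_0^2$, control the $w_0$ contribution via Lemma~\ref{orlicz-property}, and apply the Moser--Trudinger bound of Corollary~2.1 to the normalized remainder $(w_k-w_0)/\|w_k-w_0\|$, calibrating parameters so that the effective exponent stays below $\pi$. The only technical difference is the decoupling device: the paper uses Young's inequality $e^{s+t}-1\le\frac{1}{\mu}(e^{\mu s}-1)+\frac{1}{\nu}(e^{\nu t}-1)$ directly, whereas you use the identity $e^{X+Y}-1=(e^X-1)(e^Y-1)+(e^X-1)+(e^Y-1)$ together with H\"older and $(e^t-1)^q\le e^{qt}-1$; in both cases the auxiliary exponent ($\mu$, respectively $q$) must be taken close to $1$, and the resulting parameter condition is identical.
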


\begin{proof}
First we note that from the Young inequality, for $\frac{1}{\mu}+\frac{1}{\nu}=1,$
\begin{equation}\label{eq4.6}
e^{s+t}-1 \le \frac{1}{\mu} (e^{\mu s}-1)+ \frac{1}{\nu}(e^{\nu t}-1).
\end{equation}
Now using the inequality $w_{k}^{2}\le (1+\e) (w_k-w_0)^2 + C(\e) w_{0}^{2}$ and \eqref{eq4.6}, we get
\begin{align}
e^{\al p w_{k}^{2}}-1 &\le e^{\al p \left((1+\e)(w_k-w_0)^2+C(\e) w_{0}^{2}\right)}-1\\
&\le \frac{1}{\mu} \left(e^{\al p \mu \left((1+\e)(w_k-w_0)^2\right)}-1\right) + \frac{1}{\nu}\left(e^{\al p\nu \left(C(\e) w_{0}^{2}\right)}-1\right).
\end{align}
Now using the fact that $\|w_k -w_0\|^2= 1- \|w_0\|^2 +o_k(1)$, we get
\[
\al p \mu \left((1+\e)(w_k-w_0)^2\right)= \left(\al p \mu (1+\e) (1-\|w_0\|^2+o_k(1))\right)  \left(\frac{(w_k-w_0)}{\|w_k-w_0\|}\right)^2.\]
Hence for any $1<p<\frac{1}{1-\|w_0\|^2}$, and {$\e>0$ small enough} and $\mu>1$ close to $1$, {we have that}
\[\al p \mu (1+\e) (1-\|w_0\|^2) <\pi\]
and the proof follows from {\eqref{newcor}} and Lemma \ref{orlicz-property}.
\end{proof}

{From the fact that $\frac{f(t)}{t^3}$ is increasing (since $\theta\geq 3$), we deduce easily the following lemma.}

\begin{lem}\label{lem7.3}
{Let} condition ${\bf (f1)}$ holds. Then, $ sf(s)-4F(s)$ is increasing for $s\ge0$. In particular $sf(s)-4F(s)\geq 0$ for all $s\ge 0$.
\end{lem}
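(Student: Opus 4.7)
The plan is to reduce the claim to the monotonicity of the ratio $s\mapsto f(s)/s^{3}$ on $(0,\infty)$, which is what assumption ${\bf (f1)}$ essentially provides. Set $g(s) := sf(s)-4F(s)$ for $s\ge 0$. Since $f\equiv 0$ on $(-\infty,0]$, continuity of $f$ forces $f(0)=0$, so $F(0)=0$ and consequently $g(0)=0$. Using $f\in C^{1}(\mathbb R)$ from ${\bf (f1)}$, a direct differentiation yields
\[
g'(s) = f(s)+sf'(s)-4f(s) = sf'(s)-3f(s),\qquad s>0,
\]
so the claim reduces to showing that $sf'(s)-3f(s)\ge 0$ for all $s>0$.

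Next I observe the identity
\[
\frac{d}{ds}\!\left(\frac{f(s)}{s^{3}}\right) \;=\; \frac{sf'(s)-3f(s)}{s^{4}},
\]
which means $g'(s)\ge 0$ is equivalent to $s\mapsto f(s)/s^{3}$ being non-decreasing on $(0,\infty)$. To extract this from ${\bf (f1)}$, I factor
\[
\frac{f(s)}{s^{3}} \;=\; \frac{f(s)}{s^{\theta}}\cdot s^{\theta-3}.
\]
By ${\bf (f1)}$ the first factor is positive and non-decreasing on $(0,\infty)$, and since $\theta>3$ the second factor is positive and strictly increasing. A product of two positive non-decreasing functions is non-decreasing, so $f(s)/s^{3}$ is non-decreasing, hence $g'(s)\ge 0$ for $s>0$.

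Combining $g(0)=0$ with the monotonicity of $g$ on $[0,\infty)$ gives simultaneously that $sf(s)-4F(s)$ is non-decreasing and that $sf(s)-4F(s)\ge 0$ for every $s\ge 0$. There is no genuine obstacle here: the only step worth flagging is the reformulation of the desired inequality as a monotonicity statement about $f(s)/s^{3}$, after which ${\bf (f1)}$ applies essentially by inspection. (If one only wants $\theta\ge 3$, the same factorization still works, since $s^{\theta-3}$ is then merely non-decreasing rather than strictly increasing, which is enough.)
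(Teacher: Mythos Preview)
Your proof is correct and follows exactly the approach the paper indicates: the paper simply notes that $f(t)/t^{3}$ is increasing (since $\theta\ge 3$) and asserts that the lemma follows, and you have supplied the routine details behind that remark (computing $g'(s)=sf'(s)-3f(s)$ and relating it to the derivative of $f(s)/s^{3}$). There is nothing to add.
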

\noi Now we define the Nehari manifold associated to the functional $J$, as
\[ \mc{N}:=\{0\not\equiv u\in {E_V}:\langle J^{\prime}(u),u \rangle=0\}\]
and let $\ds b:=\inf_{u\in \mc{N}} J(u)$. Then we need the following Lemma that compare  $c_*$ and $b$.

\begin{lem} $c_*\leq b $.
\end{lem}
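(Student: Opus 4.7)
The plan is to construct, for each $w\in\mathcal N$, an admissible path $\gamma\in\Gamma$ whose maximum energy equals $J(w)$; then $c_*\le J(w)$ for every such $w$ and hence $c_*\le b$.

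The central step is to show that the one-dimensional map $g(t):=J(tw)$, for $w\in\mathcal N$ fixed, attains its maximum on $(0,\infty)$ exactly at $t=1$. A direct computation gives
\begin{equation*}
g'(t)=m(t^{2}\|w\|^{2})\,t\|w\|^{2}-\int_{\mathbb R}f(tw(x,0))\,w(x,0)\,dx,
\end{equation*}
and $g'(1)=0$ by the definition of $\mathcal N$. To obtain uniqueness, I would divide by $t^{3}$ and rewrite, on the set where $w(x,0)>0$,
\begin{equation*}
\frac{g'(t)}{t^{3}}=\frac{m(t^{2}\|w\|^{2})}{t^{2}\|w\|^{2}}\,\|w\|^{4}-\int_{\mathbb R}\frac{f(tw(x,0))}{(tw(x,0))^{3}}\,w(x,0)^{4}\,dx.
\end{equation*}
By $\mathbf{(m3)}$ the map $s\mapsto m(s)/s$ is decreasing, so the first term is decreasing in $t$. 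By $\mathbf{(f1)}$ the map $s\mapsto f(s)/s^{\theta}$ is increasing with $\theta>3$; writing $f(s)/s^{3}=(f(s)/s^{\theta})\,s^{\theta-3}$ as a product of two positive increasing functions shows $f(s)/s^{3}$ is increasing, hence the second term is increasing in $t$. Therefore $g'(t)/t^{3}$ is strictly decreasing, which forces $g'(t)>0$ for $0<t<1$ and $g'(t)<0$ for $t>1$; so $t=1$ is the unique maximizer and $\max_{t>0}g(t)=J(w)$.

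Next I would produce $T>1$ with $J(Tw)<0$. Using the lower bound $F(s)\ge C_{1}s^{\theta}-C_{2}$ (coming from $\mathbf{(f1)}$ as in Lemma \ref{lem4.1}) and the upper bound on $M$ furnished by $\mathbf{(m2)}$, the dominant growth of $J(tw)$ in $t$ is $-C\,t^{\theta}\int_{\mathbb R}w(x,0)^{\theta}dx$; since $w\in\mathcal N$ forces $w(\cdot,0)\not\equiv 0$ and $\theta>\max\{2,2(\sigma+1)\}$, we have $J(tw)\to-\infty$ as $t\to\infty$, so such a $T$ exists.

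With these in hand, define $\gamma:[0,1]\to E_V$ by $\gamma(s)=sTw$. Then $\gamma(0)=0$, $\gamma$ is continuous, and $J(\gamma(1))=J(Tw)<0$, so $\gamma\in\Gamma$. Moreover
\begin{equation*}
\max_{s\in[0,1]}J(\gamma(s))=\max_{t\in[0,T]}J(tw)=J(w).
\end{equation*}
By definition of $c_*$ this gives $c_*\le J(w)$, and taking the infimum over $w\in\mathcal N$ yields $c_*\le b$. The main potential obstacle is the strict monotonicity argument for $g'(t)/t^{3}$: I must handle carefully the integrand only on $\{w(x,0)>0\}$ (elsewhere $f(tw(x,0))=0$ by $\mathbf{(f1)}$), and guarantee that the set $\{w(x,0)>0\}$ has positive measure, which follows since $w\in\mathcal N$ gives $\int f(w(x,0))w(x,0)\,dx=m(\|w\|^{2})\|w\|^{2}>0$.
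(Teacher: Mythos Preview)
Your proposal is correct and follows essentially the same approach as the paper: show that for $w\in\mathcal N$ the map $t\mapsto J(tw)$ attains its maximum at $t=1$ by exploiting the monotonicity of $m(s)/s$ from $\mathbf{(m3)}$ and of $f(s)/s^{3}$ from $\mathbf{(f1)}$, then build an admissible ray-type path through $w$ and conclude. The only cosmetic difference is that the paper rewrites $g'(t)$ by subtracting the relation $g'(1)=0$ to get
\[
g'(t)=\|w\|^{4}t^{3}\!\left(\frac{m(t^{2}\|w\|^{2})}{t^{2}\|w\|^{2}}-\frac{m(\|w\|^{2})}{\|w\|^{2}}\right)+t^{3}\int_{\mathbb R}\!\left(\frac{f(w)}{w^{3}}-\frac{f(tw)}{(tw)^{3}}\right)w^{4}\,dx,
\]
whereas you argue directly that $g'(t)/t^{3}$ is strictly decreasing; these are equivalent formulations of the same monotonicity argument.
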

\proof Let $u\in \mc{N}$, define $h:(0,+\infty)\rightarrow \mb R$ by $ h(t)=J(tu)$. Then
\[h^{\prime}(t)=\langle J^{\prime}(tu),u\rangle=m(t^2\|u\|^2)t \|u\|^2-\int_\mb R f(tu)u~dx \; \text{for all} \; t>0.\]
Since $\langle J^{\prime}(u),u\rangle=0$, we have {\small
\[ h^{\prime}(t)=\|u\|^{{4}} t^{3}\left(\frac{m(t^2\|u\|^2)}{t^2\|u\|^2}-\frac{m(\|u\|^2)}{\|u\|^2} \right)+t^{3}\int_{\mb R}\left(\frac{f(u)}{u^{3}}-\frac{f(tu)}{(tu)^{3}}\right)u^{4}dx.\]
{From ${\bf (m3)}$ and ${\bf (f1)}$, we get} $h^{\prime}(1)=0$, $h^{\prime}(t)\geq 0$ for $0<t<1$ and $h^{\prime}(t)<0$ for $t>1$. Hence $J(u)=\ds \max_{t\geq0}J(tu)$. Now define $g:[0,1]\rightarrow {E_V}$ as  $g(t)=(t_0 u)t$, where $t_0$ is such that $J(t_0 u)<0$. We have $g\in \Gamma$ and therefore
\[ c_*\leq\max_{t\in[0,1]}J(g(t))\leq \max_{t\geq 0}J(tu)=J(u).\]
  Since $u\in \mc N$ is arbitrary, $c_*\leq b$ and the proof is complete.\qed\\

\noi {\bf Proof of Theorem \ref{thm4.1}:} Let $\{w_k\}\in {E_V}$ be a Palais-Smale sequence  of $J$ at level $c_*>0$. That is $J(w_k)\rightarrow c_*$ and $J^\prime (w_k) \rightarrow 0$. Then by Lemma \ref{lem4.2}, there exists $w_0\in {E_V}$ such that $w_k \rightharpoonup w_0$ weakly in ${E_V}$. Now we claim that $w_0$ is the required positive solution.\\
Now it is not difficult to see that $w_0\not \equiv 0.$ Indeed, if $w_0\equiv 0$. Then $\int_{\mb R} F(w_k(x,0))dx \rightarrow 0$ and hence
\[\frac{1}{2}M(\|w_k\|^2)\rightarrow c_*< \frac{1}{2} M(\pi).\]
Therefore, there exists $\ba$ such that $\|w_k\|^2 <\beta <\pi$ for all $k$ large. So we can find $q>1$ and close to $1$ so that $q \|w_k\|^2 <\pi.$ Now it is easy to show that {$\int_{\mathbb R}f(w_k(x,0))w_k(x,0)dx \rightarrow 0$}. Hence
\[o_k(1)=\langle J^\prime(w_k), w_k\rangle = m(\|w_k\|^2) \|w_k\|^2 -\int_\mb R f(w_k) w_k = m(\|w_k\|^2) \|w_k\|^2+o_k(1).\]
That is, $\|w_k\|\rightarrow 0$ and $J(w_k)\rightarrow 0,$  {which provides a contradiction}.

\noi To show the positivity of the solution,  we see that $\|w_k\|\rightarrow \rho_0>0$ (up to a subsequence). So $J^{\prime}(w_k)\rightarrow 0$  implies that $\text{for all} \; \phi \in E_V$, we have
\begin{equation}
 m(\rho_0^2) \left(\int_{\mb R^{2}_{+}} \nabla w_0\nabla \phi~ dx + \int_{\mb R} w_0(x,0) \phi(x,0) V(x){dx}\right) - \int_\mb R {f(w_0(x,0))\phi(x,0)} dx=0.
 \end{equation}
  That is $u_0(x)=w_0(x,0)$ satisfies the equation  $ (-\De)^{\frac{1}{2}} u_0 + V u_0=\frac{1}{m(\rho_0^2)}f(u_0)\;\mbox{in}\;\mb R$.
 Using the growth condition { in ${\bf (f1)}$ of $f$} and Trudinger-Moser inequality, we get $f(u_0)\in L^{p}_{loc}(\mb R)$ for all $1< p < \infty$. Therefore by regularity result in proposition 3.1, page 21 in \cite {XcT},  $u_0\in C^{1,\gamma}_{loc}(\mb R)$ and hence by strong maximum principle ({see} Lemma 4.2 of \cite{XcT}), we get $u_0>0$ in $\mb R$.\\

\noi \textbf{claim 1:} $\ds m(\|w_0\|^2)\|w_0\|^2\geq \int_\mb R f(w_0)w_0 dx$.
\proof The proof follows ideas in Lemma 5.1 of \cite{Gf}. For completeness, we give the details here.  Suppose by contradiction that $m(\|w_0\|^2)\|w_0\|^2< \int_\mb R f(w_0)w_0 ~dx$. That is, $\langle J^{\prime}(w_0),w_0\rangle<0.$
 Using {\bf(f1)} and Sobolev imbedding, we can see that $\langle J^{\prime}(tw_0),w_0\rangle>0$ for $t$ sufficiently small. Thus there exists $\sigma\in(0,1)$ such that $\langle {J}^{\prime}(\sigma w_0),w_0\rangle=0$. That is,  $\sigma w_0\in \mc N$. Thus according to  Lemma \ref{lem7.3} {and ${\bf (m3)}$},
 \begin{align*}
 c_*&\leq b \leq J(\sigma w_0)=J(\sigma w_0)-\frac{1}{4}\langle J^{\prime}(\sigma w_0),\sigma w_0\rangle\\
 &=\frac{M(\|\sigma w_0\|^2)}{2}-\frac{m(\|\sigma u_0\|^2)\|\sigma u_0\|^2}{4}+\int_\mb R \frac{(f(\sigma w_0)\sigma w_0-4F(
 \sigma w_0))}{4}\\
 &<\frac{1}{2}M(\| w_0\|^2)-\frac{1}{4}m(\|w_0\|^2)\| w_0\|^2+\frac{1}{4}\int_\mb R (f( u_0)u_0-4F(u_0)).
 \end{align*}
 By lower semicontinuity of norm and Fatou's Lemma, we get
 \begin{align*}
 c_*&<  \liminf_{k\rightarrow\infty}\frac{1}{2}\left(M(\|{w_k}\|^2)-\frac{1}{2}m(\|{w_k}\|^{2})\| {w_k}\|^{2}\right)+\liminf_{k\rightarrow\infty}\frac{1}{4}\int_\mb R [f( {w_k})u_k-4F({w_k})]dx\\
 &\leq \lim_{k\rightarrow\infty}[J({w_k})-\frac{1}{4}\langle J^{\prime}({w_k}),{w_k}\rangle]=c_*
 \end{align*}
 which is a contradiction and the claim 1 is proved.

\noi \textbf{Claim 2:} $J(w_0)=c_*$.
\proof Using $ \int_\mb R F({w_k(x,0)})\rightarrow \int_\mb R F({w_0(x,0)})$ and lower semicontinuity of norm we have $J(w_0)\leq c_*$.
 We are going to show that the case $J(w_0)<c_*$ can not occur.\\
 Indeed,  if $J(w_0)<c_*$ then $\|w_0\|^2<\rho_0^2.$ Moreover,
 \begin{equation}\label{7a3}
   \frac{1}{2}M(\rho_0^2)=\lim_{k\rightarrow\infty}\frac{1}{2}M(\|u_k\|^2)=c_*+\int_\mb R F(w_0)dx
 \end{equation}
  which implies  $\ds \rho_0^2 = M^{-1}(2c_*+2\int_{\mb R} F(w_0)dx)$.
  Next defining ${v_k}=\frac{{w_k}}{\|{w_k}\|}$ and ${v_0}=\frac{w_0}{\rho_0}$, we have $v_k\rightharpoonup {v_0}$ in ${E_V}$ and $\|{v_0}\|<1$.

\noi On the other hand, by {claim 1} and Lemma \ref{lem7.3}, we have
\[  J(w_0)\ge \frac{M(\|w_0\|^2)}{2}-\frac{m(\|w_0\|^2)\|w_0\|^2}{4} +\int_\mb R \frac{(f(w_0)w_0-4 F(w_0)}{4}\geq 0.\]
Using this together with Lemma \ref{lem4.3} and the equality:
$2(c_*-J(w_0))=M(\rho_{0}^{2})-M(\|w_0\|^2)$,
we get
$M(\rho_{0}^2) \le 2 c_*+M(\|w_0\|^2)<M(\pi)+M(\|w_0\|^2)$.
Therefore by ${\bf (m1)}$
\begin{equation}\label{7a6}
\rho_{0}^{2}< M^{-1}\left(M(\pi)+M(\|w_0\|^2)\right)\le \pi+\|w_0\|^2.
\end{equation}
Since $\rho_{0}^{2}(1-\|v_0\|^2)=\rho_{0}^{2}-\|w_0\|^2$, from \eqref{7a6} it follows that
$ \rho_{0}^{2} (1-\|v_0\|^2) < \pi.$
Thus, there exists $\ba>0$ such that $ \|w_k\|^{2} < \ba < \frac{\pi}{1-\|v_0\|^2}$ for $k$ large. We can choose $q>1$ close to $1$ such that $q \|w_k\|^{2} \le  \ba < \frac{\pi}{1-\|v_0\|^2}$ and using {Lemma \ref{lions}}, we conclude that for $k$ large
\[ \int_\mb R \left(e^{q |w_k(x,0)|^{2}}-1\right) dx \le \int_\mb R \left(e^{\ba |v_k(x,0)|^{2}}-1\right) \le C.\]
Now by standard calculations, using H\"{o}lder's inequality and weak convergence of  $\{w_k\}$ to $w_0$, we get $\int_\mb R f(w_k) (w_k-w_0) \rightarrow 0$ as $k\rightarrow \infty$. Since $\langle J^\prime (w_k), w_k-w_0\rangle \rightarrow 0$, it follows that
\begin{equation} \label{na7new}
m(\|w_k\|^2) \int_{\mb{R}^{2}_{+}}  \na w_k (\na w_k-\na w_0) \rightarrow 0.
\end{equation}
Now by weak convergence of $w_k$ and $m(t)>0$, we get $w_k\ra {w_0}$ strongly in ${E_V}$ and $J(w_0)=c_*$. This ends the proof of claim 2.

\noi Now by claim 2 and \eqref{7a3} we can see that $M(\rho_{0}^{2})=M(\|w_0\|^2)$ which implies that $\rho_{0}^{2}=\|w_0\|^2$. Hence, $w_0$ is a weak solution of \eqref{4eq3}. \qed

\noindent {\bf Acknowledgements:} The authors were funded by IFCAM (Indo-French Centre for Applied Mathematics) UMI CNRS under the project "Singular phenomena in reaction diffusion equations and in conservation laws".

 \end{document}